\definecolor{rose}{rgb}{0,0.4,0.6}
\newtheorem{theorem}{Theorem}[section]
\newtheorem{prop}[theorem]{Proposition}
\newtheorem{thm}[theorem]{Theorem}
\newtheorem{lemma}[theorem]{Lemma}
\newtheorem{prop-def}{Proposition-Definition}[section]
\theoremstyle{definition}
\newtheorem{defn}[theorem]{Definition}
\newtheorem{remark}[theorem]{Remark}
\newtheorem{exam}[theorem]{Example}
\newcommand{\nc}{\newcommand}
\nc{\delete}[1]{{}}
\nc{\mmargin}[1]{}
\nc{\Alg}{\mathrm{Alg}}
\nc{\AvO}{\mathrm{AvO}}
\nc{\AvA}{\mathrm{AvA}}
\nc{\rmH}{\mathrm{H}}
\nc{\mlabel}[1]{\label{#1}}  
\nc{\mcite}[1]{\cite{#1}}  
\nc{\mref}[1]{\ref{#1}}  
\nc{\mbibitem}[1]{\bibitem{#1}} 
	\nc{\mlabel}[1]{\label{#1}  
		{\hfill \hspace{1cm}{\bf{{\ }\hfill(#1)}}}}
	\nc{\mcite}[1]{\cite{#1}{{\bf{{\ }(#1)}}}}  
	\nc{\mref}[1]{\ref{#1}{{\bf{{\ }(#1)}}}}  
	\nc{\mbibitem}[1]{\bibitem[\bf #1]{#1}} 
 \font\cyrs=wncyr7
\newcommand{\bk}{{\mathbf{k}}}
\nc{\bincc}[2]{  ( {\scs{#1} \atop
		\vspace{-1cm}\scs{#2}} )}  
\nc{\oline}[1]{\overline{#1}}
\nc{\mapm}[1]{\lfloor\!|{#1}|\!\rfloor}
\nc{\bs}{\bar{S}}
\nc{\la}{\longrightarrow}
\nc{\ot}{\otimes}
\nc{\rar}{\rightarrow}
\nc{\lon }{\,\rightarrow\,}
\nc{\dar}{\downarrow}
\nc{\dap}[1]{\downarrow \rlap{$\scriptstyle{#1}$}}
\nc{\defeq}{\stackrel{\rm def}{=}}
\nc{\dis}[1]{\displaystyle{#1}}
\nc{\dotcup}{\ \displaystyle{\bigcup^\bullet}\ }
\nc{\hcm}{\ \hat{,}\ }
\nc{\hts}{\hat{\otimes}}
\nc{\hcirc}{\hat{\circ}}
\nc{\lleft}{[}
\nc{\lright}{]}
\nc{\curlyl}{\left \{ \begin{array}{c} {} \\ {} \end{array}
	\right .  \!\!\!\!\!\!\!}
\nc{\curlyr}{ \!\!\!\!\!\!\!
	\left . \begin{array}{c} {} \\ {} \end{array}
	\right \} }
\nc{\longmid}{\left | \begin{array}{c} {} \\ {} \end{array}
	\right . \!\!\!\!\!\!\!}
\nc{\ora}[1]{\stackrel{#1}{\rar}}
\nc{\ola}[1]{\stackrel{#1}{\la}}
\nc{\scs}[1]{\scriptstyle{#1}} \nc{\mrm}[1]{{\rm #1}}
\nc{\dirlim}{\displaystyle{\lim_{\longrightarrow}}\,}
\nc{\invlim}{\displaystyle{\lim_{\longleftarrow}}\,}
\nc{\dislim}[1]{\displaystyle{\lim_{#1}}} \nc{\colim}{\mrm{colim}}
\nc{\mvp}{\vspace{0.3cm}} \nc{\tk}{^{(k)}} \nc{\tp}{^\prime}
\nc{\ttp}{^{\prime\prime}} \nc{\svp}{\vspace{2cm}}
\nc{\vp}{\vspace{8cm}}
\nc{\modg}[1]{\!<\!\!{#1}\!\!>}
\nc{\intg}[1]{F_C(#1)}
\nc{\lmodg}{\!<\!\!}
\nc{\rmodg}{\!\!>\!}
\nc{\cpi}{\widehat{\Pi}}
\nc{\ssha}{{\mbox{\cyrs X}}} 
\nc{\tsha}{{\mbox{\cyrt X}}}
\nc{\shpr}{\diamond}    
\nc{\labs}{\mid\!}
\nc{\rabs}{\!\mid}
\nc{\ad}{\mrm{ad}}
\nc{\ann}{\mrm{ann}}
\nc{\Aut}{\mrm{Aut}}
\nc{\bim}{\mbox{-}\mathsf{Bimod}}
\nc{\br}{\mrm{bre}}
\nc{\can}{\mrm{can}}
\nc{\Cont}{\mrm{Cont}}
\nc{\rchar}{\mrm{char}}
\nc{\cok}{\mrm{coker}}
\nc{\de}{\mrm{dep}}
\nc{\dtf}{{R-{\rm tf}}}
\nc{\dtor}{{R-{\rm tor}}}
\nc{\Div}{{\mrm Div}}
\nc{\Diff}{\mrm{DA}}
\nc{\Diffl}{\mathsf{DA}_\lambda}
\nc{\diffo}{{\mathsf{DO}_\lambda}}
\nc{\alg}{\mathsf{Alg}}
\nc{\End}{\mrm{End}}
\nc{\Ext}{\mrm{Ext}}
\nc{\Fil}{\mrm{Fil}}
\nc{\Fr}{\mrm{Fr}}
\nc{\Frob}{\mrm{Frob}}
\nc{\Gal}{\mrm{Gal}}
\nc{\GL}{\mrm{GL}}
\nc{\Hom}{\mrm{Hom}}
\nc{\Hoch}{\mrm{Hoch}}
\nc{\hsr}{\mrm{H}}
\nc{\hpol}{\mrm{HP}}
\nc{\id}{\mrm{id}}
\nc{\im}{\mrm{im}}
\nc{\Id}{\mrm{Id}}
\nc{\ID}{\mrm{ID}}
\nc{\Irr}{\mrm{Irr}}
\nc{\incl}{\mrm{incl}}
\nc{\length}{\mrm{length}}
\nc{\NLSW}{\mrm{NLSW}}
\nc{\Lie}{\mrm{Lie}}
\nc{\mchar}{\rm char}
\nc{\mpart}{\mrm{part}}
\nc{\ql}{{\QQ_\ell}}
\nc{\qp}{{\QQ_p}}
\nc{\rank}{\mrm{rank}}
\nc{\rcot}{\mrm{cot}}
\nc{\rdef}{\mrm{def}}
\nc{\rdiv}{{\rm div}}
\nc{\rtf}{{\rm tf}}
\nc{\rtor}{{\rm tor}}
\nc{\res}{\mrm{res}}
\nc{\SL}{\mrm{SL}}
\nc{\Spec}{\mrm{Spec}}
\nc{\tor}{\mrm{tor}}
\nc{\Tr}{\mrm{Tr}}
\nc{\tr}{\mrm{tr}}
\nc{\wt}{\mrm{wt}}
\def\ot{\otimes}
\nc{\bfk}{{\bf k}}
\nc{\bfone}{{\bf 1}}
\nc{\bfzero}{{\bf 0}}
\nc{\detail}{\marginpar{\bf More detail}
	\noindent{\bf Need more detail!}
	\svp}
\nc{\gap}{\marginpar{\bf Incomplete}\noindent{\bf Incomplete!!}
	\svp}
\nc{\FMod}{\mathbf{FMod}}
\nc{\Int}{\mathbf{Int}}
\nc{\Mon}{\mathbf{Mon}}
\nc{\remarks}{\noindent{\bf Remarks: }}
\nc{\Rep}{\mathbf{Rep}}
\nc{\Rings}{\mathbf{Rings}}
\nc{\Sets}{\mathbf{Sets}}
\nc{\ob}{\mathsf{Ob}}
\nc{\BA}{{\mathbb A}}   \nc{\CC}{{\mathbb C}}
\nc{\DD}{{\mathbb D}}   \nc{\EE}{{\mathbb E}}
\nc{\FF}{{\mathbb F}}   \nc{\GG}{{\mathbb G}}
\nc{\HH}{{\mathbb H}}   \nc{\LL}{{\mathbb L}}
\nc{\NN}{{\mathbb N}}   \nc{\PP}{{\mathbb P}}
\nc{\QQ}{{\mathbb Q}}   \nc{\RR}{{\mathbb R}}
\nc{\TT}{{\mathbb T}}   \nc{\VV}{{\mathbb V}}
\nc{\ZZ}{{\mathbb Z}}   \nc{\TP}{\widetilde{P}}
\nc{\m}{{\mathbbm m}}
\nc{\cala}{{\mathcal A}}    \nc{\calc}{{\mathcal C}}
\nc{\cald}{\mathcal{D}}     \nc{\cale}{{\mathcal E}}
\nc{\calf}{{\mathcal F}}    \nc{\calg}{{\mathcal G}}
\nc{\calh}{{\mathcal H}}    \nc{\cali}{{\mathcal I}}
\nc{\call}{{\mathcal L}}    \nc{\calm}{{\mathcal M}}
\nc{\caln}{{\mathcal N}}    \nc{\calo}{{\mathcal O}}
\nc{\calp}{{\mathcal P}}    \nc{\calr}{{\mathcal R}}
\nc{\cals}{{\mathcal S}}    \nc{\calt}{{\Omega}}
\nc{\calv}{{\mathcal V}}    \nc{\calw}{{\mathcal W}}
\nc{\calx}{{\mathcal X}}
\nc{\fraka}{{\mathfrak a}}
\nc{\frakb}{\mathfrak{b}}
\nc{\frakg}{{\frak g}}
\nc{\frakl}{{\frak l}}
\nc{\fraks}{{\frak s}}
\nc{\frakB}{{\frak B}}
\nc{\frakm}{{\frak m}}
\nc{\frakM}{{\frak M}}
\nc{\frakp}{{\frak p}}
\nc{\frakW}{{\frak W}}
\nc{\frakX}{{\frak X}}
\nc{\frakS}{{\frak S}}
\nc{\frakA}{{\frak A}}
\nc{\frakx}{{\frakx}}
\nc{\lir}[1]{\textcolor{red}{\underline{Li:}#1 }}
\begin{document}

\title[Deformations theory and minimal model of operads for Nijenhuis algebras morphisms]
{Deformations theory and minimal models of operads for Nijenhuis algebras morphisms}

\author{Sami Benabdelhafidh}
\address{University of Sfax, Faculty of Sciences of Sfax, BP 1171, 3038 Sfax, Tunisia.}
\email{\bf abdelhafidhsami41@gmail.com}

\begin{abstract} 
Nijenhuis operators are very useful in the deformation theory of algebras. 
In this paper, we introduce a new cohomology theory related to deformation of Nijenhuis algebra morphisms, this notion involves simultaneous deformation of two Nijenhuis algebras and a morphism between them.
As a consequence, we define a cohomology theory of Nijenhuis algebra morphisms to interpret the lower degree cohomology groups as formal deformation. We also prove a cohomology comparison Theorem of Nijenhuis algebra morphisms, i.e. the cohomology of a morphism of Nijenhuis algebras is isomorphic to the cohomology of an auxiliary Nijenhuis algebra. Finally, we construct a minimal model for the operad governing Nijenhuis algebras morphisms.

\end{abstract}

\subjclass[2020]{
16D20, 
16E40, 
16E40,   
16S80.  
}

\keywords{ Associative algebra, Nijenhuis operator, morphism, cohomology, deformation, ($\mathrm{CCT}$) Theorem}

\maketitle

\tableofcontents

\allowdisplaybreaks

\section{Introduction}
An interesting operator arising in the study of linear deformation theory of algebraic structures \cite{Gerst3}, integrable systems and tensor hierarchies in mathematical physics \cite{koss}, quantum bi-Hamiltonian systems \cite{Cari} called a Nijenhuis operator. 
In recent years, due to the outstanding work of 
\cite{azimi,baishya-das2,ebrahimi,ebrahimi-leroux,lei,leroux,liu,liu-sheng,ma,saha,peng,wang,yuan}, more and more scholars
begun to pay attention to the Nijenhuis operators.
Leroux \cite{leroux} introduced the notion of an NS-algebra as the algebraic structure induced by a Nijenhuis operator (see also \cite{lei}). NS-algebras are a generalization of dendriform algebras and are closely related to twisted Rota-Baxter operators \cite{uchino}.
Ebrahimi-Fard \cite{ebrahimi} interpreted the associative analogue
of the Nijenhuis relation as the homogeneous version of the Rota-Baxter relation.
Das \cite{das-ns} has introduced the cohomology and deformations theory of an NS-algebra using the idea of nonsymmetric operads with multiplication. 

A deformation of a mathematical object, roughly speaking, means that
it preserves its original structure after a parameter perturbation. In physics, deformation theory
discovers from quantizing classical mechanics, and this idea promotes some researches on quantum
groups in mathematics \cite{VC01,RH1}. Recently, deformation quantization has produced many elegant
works in the context of mathematical physics. Based on work of complex analysis by Kodaira and
Spencer \cite{KK1}, deformations theory was developed in algebra \cite{RH1}. 
In 1983, Gerstenhaber and Schack~\cite{Gerst1,Gerst2} introduced a deformation theory for morphisms of associative algebras and develop a powerful result called the
cohomology comparison Theorem ($\mathrm{CCT}$). 
Fr\'{e}gier~\cite{Fr} extended this idea to Lie algebra morphisms. Morphisms cohomology and deformations of Hom-algebras were established in \cite{Mak}.  
Du and Bao~\cite{Bao} examined the cohomology and deformation of Rota-Baxter operator on associative algebra morphisms. 
Unlike Rota-Baxter operators, it is important to observe that Nijenhuis operators cannot be characterized by their graphs as subalgebras of some bigger algebras. This makes the study of Nijenhuis operators a little more complicated and thus some results about Nijenhuis operators are not analogous to the study of Rota-Baxter operators. 
This paper contributes to addressing these difficulties in the context of
Nijenhuis structures.

Another significant challenge in the study of algebraic structures is understanding their homotopy analogs
just like $A_\infty$-algebras for usual associative algebras. From the perspective of operad theory,
specifically, the task is to formulate a cofibrant resolution for the operad of an algebraic structure.
The most desirable outcome would be providing a minimal model of the operad governing the
algebraic structure. When this operad is Koszul, there exists a general theory, the so-called Koszul
duality for operads \cite{GJ94,GK94}, which defines a homotopy version of this algebraic structure via
the cobar construction of the Koszul dual cooperad, which, in this case, is a minimal model. However, for
operads that are not Koszul, significant challenges emerge, and examples of minimal models in this setting remain scarce. 

Recently, 
cohomology theory of Nijenhuis algebras was studied in \cite{das1,Song},
it is very exciting to investigate cohomology and deformation of Nijenhuis operator on associative algebra morphisms.
This was our motivation for writing the present paper.
Precisely, the concepts of bimodule  over Nijenhuis algebra morphisms are defined and cohomology and deformation of Nijenhuis algebra morphisms may be studied.  In development, ($\mathrm{CCT}$) the cohomology comparison Theorem of Nijenhuis versions is considered, which says that  the cohomology of Nijenhuis algebra morphisms is isomorphic to the cohomology of an auxiliary Nijenhuis algebra. Finally, we give a minimal model of operad for Nijenhuis algebras
morphisms.\\ 
\textit{The paper is organized as follows:} In Section \ref{sec:2} reviews some concepts on Nijenhuis algebras and Nijenhuis bimodule. In Section \ref{sec:3} we introduce cohomology of
Nijenhuis algebra morphisms which
involves both Nijenhuis algebra part and morphism part (see Definition \ref{defi: cohomology of diff morphism}
). In Section \ref{sec:4} sets up deformation of Nijenhuis algebra morphisms, where it is shown that a Nijenhuis algebra moprhisms is rigid if the $2$nd-cohomology group is trivial (see Theorem \ref{thm: rigid}). Section \ref{sec:5} gives ($\mathrm{CCT}$) the cohomology comparison Theorem of Nijenhuis algebra morphisms, which says a cohomology of a Nijenhuis algebra morphisms is isomorphic to the cohomology of an auxiliary Nijenhuis algebra (see Theorem \ref{thm: CCT of RB}). In Section \ref{sec:6}
we present a minimal model of operad for Nijenhuis algebras morphisms via the method of Dotsenkol-Poncin \cite{Do} (see Proposition \ref{min-model}) . 

Throughout this paper, $\bk$ denotes a field. All the vector spaces and algebras are over $\bk$ and all tensor products are also taking over $\bk$.

\section{Nijenhuis algebra and their bimodule} \label{sec:2}
In this section, we discuss Nijenhuis algebras and their bimodule and we provide some basic observations.

\begin{defn}
	Let $(A, \cdot)$ be an associative algebra. A linear operator $P_A: A \rightarrow A$ is a \textit{Nijenhuis operator} if it satisfies
	\begin{eqnarray*} \label{Eq: Nijenhuis relation}
		P_A(x) \cdot P_A(y) = P_A \big(P_A(x) \cdot y + x \cdot P_A(y) - P_A( x \cdot y)\big),~\text{for any } x, y \in A.
	\end{eqnarray*}	
	 
In this case, $(A, P_A)$ is called a \textit{Nijenhuis algebra}.
\end{defn}
 
Next, we recall a morphism between two Nijenhuis algebras.
\begin{defn}
Let $(A, P_{A})$ and $(B, P_{B})$ be two Nijenhuis algebras. A morphism of associative algebras $f: A \rightarrow B$ is called a \textit{morphism of Nijenhuis algebras}, if $ f \circ P_{A}=P_{B} \circ f $.
 \end{defn}



To better illustrate the above definition, we now present an explicit example of morphism between two Nijenhuis algebras.

\begin{exam}
We consider a $3$-dimensional Nijenhuis  algebra $(A,P_A)$
with respect to a basis $\{e_1, e_2, e_3\}$, by the multiplication $\cdot_A$ and the Nijenhuis operator $P_A$ 
such that
\[
e_1 \cdot_A e_1 = \alpha e_1, \quad e_1 \cdot_A e_2 = e_2 \cdot_A e_1 = \alpha e_2, \quad e_1 \cdot_A e_3 = e_3 \cdot_A e_1 = \beta e_3,
\]
\[
e_2 \cdot_A e_2 = \alpha e_2, \quad e_2 \cdot_A e_3 = \beta e_3, \quad e_3 \cdot_A e_2 = e_3 \cdot_A e_3 = 0.
\]
\[
P_A(e_1) = \alpha e_1, \quad P_A(e_2) = \alpha e_2, \quad P_A(e_3) = \beta e_3.
\]
where $\alpha,\beta$ are parameters.\\
We consider also a $2$-dimensional Nijenhuis algebra $(B,P_B)$ defined, with respect to a
basis $\{f_1, f_2\}$, by the multiplication $\cdot_B$ and the Nijenhuis operator $P_B$ such that
\[
f_1 \cdot_B f_1 = f_1, \quad f_i \cdot_B f_j = f_2, \quad \text{for} \quad (i,j) \neq (1,1).
\]

\[
P_B(f_1) = \gamma f_1-\gamma f_2, \quad P_B(f_2) = 0.
\]
where $\gamma$ is a parameter.

Then, the linear map \(\phi : (A,P_A) \to (B,P_B)\) defined, when $\alpha=\beta=\gamma=1$ as:
\[
\phi(e_1) = f_1-f_2, \quad \phi(e_2) = f_1-f_2, \quad \phi(e_3) = 0,
\]
is a morphism of Nijenhuis algebras.    
\end{exam}
 \begin{defn}
 	Let $(A, P_A)$ be a Nijenhuis associative algebra.
 	\begin{itemize}
 		\item[(i)] A \textit{bimodule over the Nijenhuis algebra} $(A, P_A)$ is a bimodule $M$ over associative algebra $(A,\cdot)$ endowed with an operator $P_M: M\rightarrow M$, such that for any $x\in A, m\in M$, the following equalities hold:
 		\begin{eqnarray}
P_A(x) P_M(m) =& P_M \big(P_A(x)m + xP_M(m) - P_M (xm)\big),\\
P_M(m) P_A(x) =& P_M \big(P_M(m)x + mP_A(x) - P_M (mx)\big).
 		\end{eqnarray}
 		\item[(ii)] Given two bimodule $(M,P_M)$ and $(N,P_N)$ over Nijenhuis algebra $(A,P_A)$, a morphism from $(M,P_M)$ to $(N,P_N)$ is a bimodule morphism $f: M\rightarrow N$ over $(A, \cdot)$ such that:
 	\begin{equation}\label{bimodule}
        f\circ P_M=P_N\circ f.
        \end{equation}
 	\end{itemize}
 \end{defn}
\begin{exam}
Any Nijenhuis algebra $(A, P_A)$ is a Nijenhuis bimodule over itself, called the regular Nijenhuis bimodule.
\end{exam}
\medskip


It is important to remark that Nijenhuis operators on an associative algebra $(A, \cdot )$ are closely related
to linear deformation of the algebra structure \cite{Gerst3}.
\begin{prop}\label{newmultiplication}
(\cite{Song}) 
	\label{Prop: new Nj algebra}
	Let $(A, P_A)$ be a Nijenhuis algebra. Define a new binary operation (deformed multiplication) $\cdot_P$ over $A$ as:
    
	\begin{eqnarray*}
		x \cdot_P y := P_A(x)\cdot y + x\cdot P_A(y) - P_A(x\cdot y),
	\end{eqnarray*}
	for any $x, y\in A$. Then
	\begin{enumerate}
		\item the operation $\cdot_P $ is associative and $(A, \cdot_P)$ is a new associative algebra;
		\item the triple $(A, \cdot_P, P_A)$ also forms a Nijenhuis algebra and denote it by $(A_P, P_A)$.
	\end{enumerate}
\end{prop}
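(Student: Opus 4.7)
The proof proposal is to verify both claims by direct computation, organized around one key identity that streamlines everything. That identity is
\[
P_A(x \cdot_P y) \;=\; P_A(x) \cdot P_A(y),
\]
which is just the Nijenhuis relation rewritten in terms of $\cdot_P$. I would establish this first, since it will let me replace the most expensive-looking subterm $P_A(x \cdot_P y)$ by the clean product $P_A(x)\cdot P_A(y)$ throughout the subsequent calculations.

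For (1), I would expand $(x \cdot_P y)\cdot_P z$ and $x\cdot_P(y\cdot_P z)$ using the definition of $\cdot_P$, each giving three terms; then I would expand the inner $\cdot_P$-product once more and apply the above identity to collapse terms of the form $P_A(u\cdot_P v)$ into $P_A(u)\cdot P_A(v)$. After distributing and using only the associativity of the original product $\cdot$ and linearity of $P_A$, the two sides should reduce to the same sum of monomials in $P_A^k$ applied to various $\cdot$-products of $x,y,z$. The main obstacle here is purely bookkeeping: about a dozen terms on each side must be matched up by inspection.

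For (2), I would verify that $P_A$ remains a Nijenhuis operator for $\cdot_P$, i.e.
\[
P_A(x)\cdot_P P_A(y) \;=\; P_A\bigl(P_A(x)\cdot_P y + x\cdot_P P_A(y) - P_A(x\cdot_P y)\bigr).
\]
On the left, expand by the definition of $\cdot_P$ to get $P_A^2(x)\cdot P_A(y)+P_A(x)\cdot P_A^2(y)-P_A(P_A(x)\cdot P_A(y))$, and then replace the first two of these three terms by their equivalents via the original Nijenhuis relation applied to the pairs $(P_A(x),y)$ and $(x,P_A(y))$. On the right, expand each of $P_A(x)\cdot_P y$, $x\cdot_P P_A(y)$, and $P_A(x\cdot_P y)$ (the last using the key identity of paragraph one), simplify inside the outer $P_A$, and then apply $P_A$ once using its linearity. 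The two sides should then be manifestly identical, as both collapse to $P_A$ of the same five-term expression
\[
P_A(P_A(x))\cdot y + P_A(x)\cdot P_A(y) + x\cdot P_A(P_A(y)) - P_A(P_A(x)\cdot y) - P_A(x\cdot P_A(y)).
\]
Again the difficulty is not conceptual but organizational, namely keeping track of cancellations between the three summands of $\cdot_P$; writing out the argument of the right-hand $P_A$ explicitly before applying $P_A$ avoids nested-expansion errors. With these two calculations the proposition follows.
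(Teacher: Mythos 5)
Your proposal is correct, and there is nothing in the paper to compare it against: Proposition \ref{Prop: new Nj algebra} is quoted from \cite{Song} and the paper supplies no proof of it. Your key identity $P_A(x\cdot_P y)=P_A(x)\cdot P_A(y)$ is indeed just the Nijenhuis relation read through the definition of $\cdot_P$, and it does the work you claim in part (2): both sides of the deformed Nijenhuis relation reduce to $P_A$ applied to the five-term expression $P_A^2(x)\cdot y+P_A(x)\cdot P_A(y)+x\cdot P_A^2(y)-P_A(P_A(x)\cdot y)-P_A(x\cdot P_A(y))$, exactly as you state.

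One small imprecision in your description of part (1): after you collapse $P_A(x\cdot_P y)$ and $P_A(y\cdot_P z)$ via the key identity and expand everything, the two sides do \emph{not} yet agree using only associativity of $\cdot$ and linearity of $P_A$. The left side still contains $-P_A(x\cdot y)\cdot P_A(z)+P_A(P_A(x\cdot y)\cdot z)$ and the right side contains $-P_A(x)\cdot P_A(y\cdot z)+P_A(x\cdot P_A(y\cdot z))$, and matching these requires one further application of the original Nijenhuis relation, to the pairs $(x\cdot y,\,z)$ and $(x,\,y\cdot z)$ respectively; each side then acquires the common term $P_A^2(x\cdot y\cdot z)$ and the remaining monomials coincide. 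Equivalently, you can view this as using your key identity once more in the reverse direction, on the products $P_A(xy)\cdot P_A(z)$ and $P_A(x)\cdot P_A(yz)$. With that extra step made explicit, the argument is complete.
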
	
Let $(A, P_A)$ be a Nijenhuis algebra and $(M,P_M)$ be a bimodule over $(A, P_A)$. Then
we can make $M$ into a bimodule over $(A_P, P_A) $. For any $x\in A$ and $m\in M$, we define the following actions:
\begin{align*}
x\rhd m:=&P_A(x)m+xP_M(m)-P_M(xm),\\
m\lhd x:=&P_M(m)x+m P_A(x)-P_M(mx).
\end{align*}
Then we have the following Lemma. 
\begin{lemma}\label{Prop: new-bimodule}
		The action  $(\rhd, \lhd)$ makes $M$ into a bimodule over the \textit{deformed associative algebra} $(A_P, P_A)$ and we denote this bimodule by $({}_{\rhd}M_\lhd, P_M)$. 
\end{lemma}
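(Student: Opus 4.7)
The plan is to verify by direct computation (i) the two module axioms and the bimodule compatibility for $\rhd, \lhd$ over the deformed algebra $(A_P, P_A)$, and (ii) the two Nijenhuis bimodule identities for $P_M$ with respect to these new actions. The key observation that makes every step tractable is a triple of \emph{deformation identities}:
\begin{align*}
P_A(x \cdot_P y) &= P_A(x)\, P_A(y), \\
P_M(x \rhd m) &= P_A(x)\, P_M(m), \\
P_M(m \lhd x) &= P_M(m)\, P_A(x),
\end{align*}
which follow at once from the Nijenhuis relation on $A$ and from the two bimodule Nijenhuis relations for $(M,P_M)$, by re-reading the relations as saying ``$P$ applied to the bracket equals the product of $P$'s.'' I would establish these three identities first, because they cause a large number of $P_M$-terms to collapse upon expansion.

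For the left module axiom $(x \cdot_P y) \rhd m = x \rhd (y \rhd m)$ I would expand the left-hand side using the definition of $\cdot_P$ and $\rhd$, employing $P_A(x \cdot_P y) = P_A(x) P_A(y)$ on the leading term; I would expand the right-hand side by iterating $\rhd$, using $P_M(y \rhd m) = P_A(y) P_M(m)$ on the middle slot. Both sides then share the ``free'' terms $P_A(x) P_A(y) m$, $P_A(x) y P_M(m)$, $x P_A(y) P_M(m)$, and the remaining difference reduces to
\begin{align*}
P_A(xy) P_M(m) &+ P_M(P_A(x) y m) - P_M(P_A(xy) m) \\
&= P_A(x) P_M(ym) + P_M(x y P_M(m)) - P_M(x P_M(ym)),
\end{align*}
which is obtained by applying the left bimodule Nijenhuis relation once with the pair $(xy, m)$ and once with the pair $(x, ym)$, then comparing via associativity of $\cdot$ and the bimodule axiom $(xy)m = x(ym)$. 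The right module axiom is proved symmetrically from the right bimodule Nijenhuis relation, and the compatibility $(x \rhd m) \lhd y = x \rhd (m \lhd y)$ follows in the same style but now invokes both Nijenhuis bimodule relations together with the two-sided bimodule axiom $x(my) = (xm)y$.

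For the Nijenhuis bimodule conditions themselves, the deformation identities make the verification almost immediate: expanding $P_A(x) \rhd P_M(m)$ gives $P_A^2(x)\, P_M(m) + P_A(x)\, P_M^2(m) - P_M(P_A(x) P_M(m))$, while applying $P_M$ to each summand of $P_A(x) \rhd m + x \rhd P_M(m) - P_M(x \rhd m)$ and using $P_M(x \rhd m) = P_A(x) P_M(m)$ yields exactly the same expression; the condition for $\lhd$ is checked in the same way. The main obstacle is not conceptual but purely organizational: each of the three module-type identities expands into seven or eight terms on each side, and the cancellations rely on invoking the bimodule Nijenhuis relations in the \emph{reverse} direction (replacing $P_A(z) P_M(n)$ by the corresponding sum of $P_M$-terms) at the right moment, which has to be done systematically to avoid errors.
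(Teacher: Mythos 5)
Your proof is correct. Note that the paper itself states this lemma without any proof (it is implicitly deferred to the cited literature on Nijenhuis algebras), so there is no argument of the paper's to compare against; your write-up would in fact fill that gap. The organizing device you isolate first — the three multiplicativity identities $P_A(x\cdot_P y)=P_A(x)P_A(y)$, $P_M(x\rhd m)=P_A(x)P_M(m)$, $P_M(m\lhd x)=P_M(m)P_A(x)$, which are just restatements of the Nijenhuis relation and the two bimodule Nijenhuis relations — is exactly the right one: it collapses the leading terms in each of the three associativity-type axioms, reduces the residual identity in each case to two applications of the original bimodule Nijenhuis relations (at the pairs $(xy,m)$ and $(x,ym)$, etc.) combined with $(xy)m=x(ym)$, and makes the two Nijenhuis bimodule conditions for the new actions follow essentially by substitution. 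I verified the residual identity you display for the left-module axiom as well as its right-module and mixed analogues; all check out.
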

Gerstenhaber and Schack \cite{Gerst2} define the bimodule of an associative algebra morphisms, that is, let $\phi:A\rightarrow B$ be a morphism of associative algebras , then a $\phi$-bimodule is a triple $\langle M,N ,\psi\rangle$ such
that $M$ is a  bimodule over $A$, $N$ is a  bimodule over $B$, and $\psi: M \rightarrow N$
is an $A$-bimodule morphism, where $N$ is considered as a bimodule over associative algebra $A$ in a natural way. Similarly, let $\phi:(A, P_{A})\rightarrow (B, P_{B})$ be a morphism of Nijenhuis algebra, then a Nijenhuis $\phi$-bimodule is a triple $\langle (M, P_M), (N, P_N),\psi\rangle$ such
that $(M,P_M)$ is a Nijenhuis bimodule over $(A, P_{A})$, $(N,P_N)$ is a Nijenhuis bimodule over $(B, P_{B})$, and $\psi: (M,P_M)\rightarrow (N,P_N)$
is an $(A, P_{A})$-bimodule morphism, where $(N,P_N)$ is regarded as a bimodule over Nijenhuis algebra $(A, P_{A})$ in a natural way.
Then we have the
following observation, which may play an important role to study the cohomology
theory of Nijenhuis algebra morphisms in the next section.
\begin{thm}\label{thm:triang action Module-morphism}
Let $\psi:(M,P_M)\rightarrow(N,P_N)$ be a Nijenhuis morphism of $(A, P_A)$-bimodule, 
then $\psi:({}_{\rhd}M_\lhd,P_M) \rightarrow({}_\rhd N_\lhd,P_N) $ 
is a Nijenhuis morphism of $(A_P, P_A)$-bimodule. 
\end{thm}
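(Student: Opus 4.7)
The plan is to reduce the theorem to two verifications: first, that $\psi$ respects the deformed actions of $A_P$ on the left and right; second, that $\psi$ still commutes with the Nijenhuis operators, which is already part of the hypothesis. Lemma~\ref{Prop: new-bimodule} applied to both $(M,P_M)$ and $(N,P_N)$ guarantees that $({}_\rhd M_\lhd, P_M)$ and $({}_\rhd N_\lhd, P_N)$ carry genuine $(A_P,P_A)$-bimodule structures, so the statement makes sense; what remains is purely the morphism compatibility.

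For the left action I would compute, for arbitrary $x\in A$ and $m\in M$,
\begin{align*}
\psi(x\rhd m) &= \psi\bigl(P_A(x)m + xP_M(m) - P_M(xm)\bigr)\\
&= P_A(x)\psi(m) + x\,\psi(P_M(m)) - \psi(P_M(xm)),
\end{align*}
using that $\psi$ is an $A$-bimodule morphism in the undeformed sense. Now applying the hypothesis $\psi\circ P_M = P_N\circ\psi$ to the two terms containing $P_M$, the right-hand side becomes $P_A(x)\psi(m) + x P_N(\psi(m)) - P_N(x\psi(m))$, which is exactly $x\rhd \psi(m)$ in $({}_\rhd N_\lhd,P_N)$. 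The calculation for the right action $\psi(m\lhd x)=\psi(m)\lhd x$ is entirely symmetric and uses the same two ingredients in the same order.

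Finally, the Nijenhuis-operator compatibility $\psi\circ P_M = P_N\circ\psi$ is unchanged when we pass from the original bimodule structures to the deformed ones, since the operators themselves have not been modified; it is simply reread in the new context. Collecting these two facts gives the required morphism of $(A_P,P_A)$-bimodules.

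I do not expect any real obstacle here. The argument is a two-line expansion; the only point where one could slip is making sure to use $\psi$'s $A$-bilinearity \emph{before} invoking the intertwining with $P_M,P_N$, so that every term of the deformed formula on one side matches the corresponding term on the other. No associativity or Nijenhuis identity is needed in the proof itself — those identities were already used to establish Lemma~\ref{Prop: new-bimodule}, which is what makes the target objects bimodules in the first place.
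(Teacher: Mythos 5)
Your proposal is correct and follows essentially the same route as the paper: expand $\psi(x\rhd m)$ using $A$-bilinearity of $\psi$, then apply $\psi\circ P_M=P_N\circ\psi$ to identify the result with $x\rhd\psi(m)$, with the right action handled symmetrically. The only (harmless) addition is your explicit appeal to Lemma~\ref{Prop: new-bimodule} to justify that the deformed structures are bimodules, which the paper leaves implicit.
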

\begin{proof}
Let $({}_{\rhd}M_\lhd, P_M)$, $({}_{\rhd}N_\lhd, P_N)$ be two $(A, P_A)$-bimodules. It suffice to show that
$\psi:({}_{\rhd}M_\lhd,P_M)$ $ \rightarrow({}_\rhd N_\lhd,P_N) $ is a morphism of $(A_P, P_A)$-bimodule .
Since $\psi:(M,P_M)\rightarrow(N,P_N)$ is a Nijenhuis morphism we have $\psi \circ P_M=P_N \circ \psi$, it follows that, for any $x \in A, m \in M$,  
\begin{align*}
\psi( x \rhd_M m)=&\psi(P_A(x)m+xP_M(m)-P_M(xm))\\
=&\psi(P_A(x)m)+\psi(xP_M(m) )-\psi(P_M(xm))\\
                 =&P_A(x )\psi(m) +x P_N(\psi(m) )-P_N(x\psi(m) )  \\
                =&x\rhd_N \psi(m). 
\end{align*}
Similarly, we can show that $\psi(m \lhd_M x)= \psi(m) \lhd_N x$.
Then $\psi:({}_{\rhd}M_\lhd,P_M) \rightarrow({}_\rhd N_\lhd, P_N) $ 
is a Nijenhuis morphism of $(A_P, P_A)$-bimodule.
\end{proof}
\begin{remark}\label{remark morphism module}
Let $\phi:(A, P_{A})\rightarrow (B, P_{B})$ be a morphism of Nijenhuis algebras and $\langle (M,P_M), (N,$\\$ P_N),\psi\rangle$ be a Nijenhuis $\phi$-bimodule. By Theorem \ref{thm:triang action Module-morphism},
 $\psi:({}_{\rhd}M_\lhd,P_M)\rightarrow({}_{\rhd}N_\lhd,P_N)$ is a Nijenhuis morphism
 of $(A_P, P_A)$-bimodule 
 that is, $\langle ({}_{\rhd}M_\lhd, P_M),({}_\rhd N_\lhd,P_N) ,\psi\rangle$ is still a Nijenhuis $\phi$-bimodule.
To avoid confusion in the following context, we denote it by
 $\langle{}_{\rhd}M_\lhd,{}_\rhd N_\lhd, {}_\rhd\psi_\lhd\rangle$.
\end{remark}
\section{Cohomology of Nijenhuis algebra morphisms } \label{sec:3}

Let $M$ be a bimodule over an associative algebra $A$. Recall that the \textit{Hochschild cohomology of $A$ with coefficients in $M$}:  $$(C_{\mathrm{Alg}}^\bullet(A,M):=\bigoplus\limits_{n=0}^\infty C_{\mathrm{Alg}}^n(A,M), \delta^n_{\mathrm{Alg}}),$$ where $C_{\mathrm{Alg}}^n(A,M)=(\mathrm{Hom}(A^{\otimes n},M)$ and the differential
$\delta^n_{\mathrm{Alg}}: C^n_{\mathrm{Alg}}(A,M)\rightarrow C^{n+1}_{\mathrm{Alg}}(A,M)$  is given by
\begin{align*}\delta^n_{\mathrm{Alg}}(f)(x_1,\dots, x_{n+1}):=&x_1f(x_2, \dots, x_n)+\sum_{i=1}^n(-1)^if(x_1, \dots, x_ix_{i+1}, \dots, x_{n+1})\\
& +(-1)^{n+1}f(x_1,\dots, x_{n})x_{n+1}
\end{align*}
for all $f\in C^n(A,M), x_1,\dots,x_{n+1}\in A$. The corresponding \textit{Hochschild cohomology} is denoted $\mathrm{HH}^\bullet(A,M)$. When $M=A$, just denote the \textit{Hochschild cochain complex with coefficients in $A$} by $C^\bullet_{\mathrm{Alg}}(A)$ and denote the \textit{Hochschild cohomology by $\mathrm{HH}^\bullet(A)$.}
\medskip

Now, we recall the cohomology theory of Nijenhuis algebras \cite{das1,Song}.
\begin{defn}
 	Let $A=(A, P_A)$ be a Nijenhuis algebra and $M=(M,P_M)$ be a Nijenhuis bimodule over it. Then the cochain complex $(C^\bullet(A, {_\rhd M_\lhd}),\delta_{\mathrm {Alg}} ^\bullet)$ is called the \textit{cochain complex of Nijenhuis operator $P_A$ with coefficients in $M$},  denoted it by $(C_{\mathrm{NjO}}^\bullet(A, M), \delta_{\mathrm{NjO}}^\bullet)$, 
	as follows:
	for any $n\geqslant 0 $,
	$$C_{\mathrm{NjO}}^n(A, M):=\Hom(A^{\otimes n}, M)$$
	and its differential	$\delta_{\mathrm{NjO}}^n: C^n_{\mathrm{NjO}}(A, M )\rightarrow C^{n+1}_{\mathrm{NjO}}(A, M) $
	is given by
	\begin{align*} \label{Eq: diff of Nijenhuis operator}
		\delta^n_{\mathrm{NjO}}(f) := - P_M\circ \delta_{\mathrm{Alg}}^{n}(f)+\partial^{n}(f)
	\end{align*}
	for any $f\in C_{\mathrm{NjO}}^n(A, M)$, 
  where $\partial^n: C^n_{\mathrm{Alg}}(A_P, {M_P})\rightarrow C^{n+1}_{\mathrm{Alg}}(A_P, {M_P})$
is given by
\begin{align*}
	\partial^n(f)(x_{1, n+1})
	=&\ x_1\rhd f(x_{2, n+1})+\sum_{i=1}^n (-1)^{i} f (x_{1, i-1}\otimes x_{i}\cdot_P x_{i+1} \otimes x_{i+2, n+1})
	+ (-1)^{n+1} f(x_{1, n})\lhd x_{n+1}. 
    \end{align*} 
\end{defn}
Let $A=(A, P_A)$ be a Nijenhuis algebra and $M=(M, P_M)$ be a Nijenhuis bimodule over it. 

 Given a chain map: $\Phi^\bullet: C_{\mathrm{Alg}}^\bullet(A,M)\rightarrow C^\bullet_{\mathrm{NjO}}(A,M) $, as following
\begin{align*}
\Phi^0:=\mathrm{id}_M,~~\Phi^1(f):= f\circ P_A-P_M\circ f
\end{align*} 

and when $n\geqslant 2$, 
\begin{equation}\label{cochai}
\begin{split}
\Phi^n&(f)(x_1, \dots, x_n)  
\\ 
&:= \sum_{k=0}^n \sum_{1\leqslant i_1<i_2<\dots<i_k\leqslant n} (-1)^{n-k}
P_M^{n-k} \circ f(x_{1, i_1-1}, P(x_{i_1}), x_{i_1+1, i_2-1}, P(x_{i_2}), \dots, P(x_{i_k}), x_{i_k+1, n}), 
\end{split}
\end{equation}
for any $ f\in C^n_{\mathrm{Alg}}(A, M)$.

\begin{defn} \label{Def: definition of Nijenhuis cohomology}
	Let $M=(M, P_M)$ be a Nijenhuis bimodule over a Nijenhuis algebra $A=(A, P_A)$. We define the \textit{ cochain complex of Nijenhuis algebra $A$ with coefficients in Nijenhuis bimodule $M$}, denoted by $(C^\bullet_{\mathrm{NjA}}(A, M), \delta_{\mathrm{NjA}}^\bullet)$, to be the mapping cone of $\Phi^\bullet$ shifted by $-1$, that is,
	$$C^0_{\mathrm{NjA}}(A, M):=C^0_\mathrm{Alg}(A, M) \quad \mathrm{and} \quad C^n_{\mathrm{NjA}}(A, M):=C^n_\Alg(A, M)\oplus C^{n-1}_{\mathrm{NjO}}(A, M), \text{ for } n\geqslant 1, $$
	and its differential $\delta_{\mathrm{NjA}}^n: C^n_{\mathrm{NjA}}(A, M)\rightarrow C^{n+1}_{\mathrm{NjA}}(A, M)$ is given by
	\begin{align*}
		\delta_{\mathrm{NjA}}^n(f, g)
		:=&\ (\delta_{\mathrm{Alg}}^n(f), -\delta_\mathrm{NjO}^{n-1}(g)-\Phi^n(f))
	\end{align*}	
	for any $f\in C^n_\mathrm{Alg}(A, M)$ and $g\in C^{n-1}_{\mathrm{NjO}}(A, M)$.

	The cohomology of $C^\bullet_{\mathrm{NjA}}(A, M)$, denoted by $\mathrm{H}_{\mathrm{NjA}}^\bullet(A, M)$, is called the \textit{cohomology of Nijenhuis algebra $A$ with coefficients in Nijenhuis bimodule $M$}.
	When $M$ is the regular Nijenhuis bimodule $A$ itself,
we simply denote $C^\bullet_{\mathrm{NjA}}(A, A)$ by $C^\bullet_{\mathrm{NjA}}(A)$ and $\mathrm{H}^\bullet_{\mathrm{NjA}}(A, A)$ by $\mathrm{H}^\bullet_{\mathrm{NjA}}(A)$, called the \textit{cochain complex of Nijenhuis algebra $A$} and the \textit{cohomology of Nijenhuis algebra $A$}, respectively.
\end{defn}

The original cohomology theory associated to deformation  of associative algebra morphism was introduced by Gerstenhaber \cite{Gerst1}.
Let $\phi: A \rightarrow B$ be a morphism of associative algebras and
$\langle M,N,\psi \rangle$ be a
$\phi$-module. The \textit{cochain complex $C_{\mathrm{mor} } ^\bullet (\phi,\psi)$ of $\phi$ with coefficients in
$\langle M,N,\psi \rangle$} is given by
\begin{equation*}\label{eq:dac}
	C_{\mathrm{mor}}^n(\phi,\psi):=
		\begin{cases}
            C^n (\phi,\psi)=0, &n\leq 0,\\
			C^n_{\mathrm{Alg}} (A ,M )\oplus C^{n}_{\mathrm{Alg}} (B ,N )\oplus C^{n-1}_{\mathrm{Alg}} (A ,N),&n\geq1, 
		\end{cases}
	\end{equation*}
and the coboundary operator $\delta_{\mathrm{mor}}^n:C_{\mathrm{mor}}^n(\phi,\psi) \rightarrow C_{\mathrm{mor}}^{n+1}(\phi,\psi)$ is given by
$$\delta_{\mathrm{mor}}^n(f,g,h):=(\delta_{\mathrm{Alg} }^n (f),\delta_{\mathrm{Alg}}^n (g),\psi \circ f-g\circ \phi^{\otimes n}-\delta_{\mathrm{Alg}}^{n-1}(h)).$$
Let $\phi:(A, P_{A})\rightarrow (B, P_{B})$ be a morphism of Nijenhuis algebras and $\langle (M,P_M), (N, P_N),\psi\rangle$ be a Nijenhuis $\phi$-bimodule. By Remark \ref{remark morphism module}, 
$\langle ({}_{\rhd}M_\lhd, P_M),({}_\rhd N_\lhd,P_N) ,{}_\rhd \psi_\lhd\rangle$ is still a Nijenhuis $\phi$-bimodule.
Hence, we may construct a cochain complex that controls
deformation of Nijenhuis algebra morphisms as follows.
\begin{prop}\label{pro:chain map}
Suppose that $C_{\mathrm{mor}}^\bullet(\phi,\psi)$ is a Hochschild cochain complex of $\phi$ with coefficients in $\psi$. Given a linear map 
$\Theta^\bullet$: $C_{\mathrm{mor}}^\bullet(\phi,\psi)\rightarrow
C_{\mathrm{mor}}^\bullet(\phi, {} _\rhd\psi_\lhd)$ by
\begin{enumerate}
\item[(i)] $\Theta^0: C_{\mathrm{mor}}^0(\phi,\psi)\rightarrow C_{\mathrm{mor}}^0(\phi,{} _\rhd\psi_\lhd)$ to
 be identity map,
\item[(ii)]
if $n\geqslant 1$, $\Theta^n: C_{\mathrm{mor}}^n(\phi,\psi)\rightarrow C_{\mathrm{mor}}^n(\phi,_{\rhd} \psi_\lhd)$ is defined as
$$ \Theta^n(f,g,h):=(\Phi_{A,M}^n(f),\Phi_{B,N}^n(g),\Phi_{A ,N}^{n-1}(h)),$$
 \end{enumerate}
 for $f\in C _{\mathrm{Alg}}^n(A ,M)$, $g\in C_{\mathrm{Alg}}^n(B ,N)$ and
 $h\in C_{\mathrm{Alg}}^{n-1}(A ,N)$. 
 Then $\Theta^\bullet: C_{\mathrm{mor}}^\bullet(\phi,\psi)\rightarrow C_{\mathrm{mor}}^\bullet(\phi, {}_\rhd \psi_\lhd)$ is a chain map, i.e., $\Theta^{n+1} \circ \delta_{\mathrm{mor}}^n=\delta_{\mathrm{mor}}^{n}\circ \Theta^n$.
In other words, the following diagram is commutative:
\[\xymatrix{
		&C_{\mathrm{mor}}^n(\phi,\psi)\ar[r]^-{\delta_{\mathrm{mor}}^n}\ar[d]^-{\Theta^n}&C_{\mathrm{mor}}^{n+1}(\phi,\psi)\ar[d]^{\Theta^{n+1}}&\\
		& C_{\mathrm{mor}}^n(\phi,{} _\rhd\psi_\lhd)\ar[r]^-{\delta_{\mathrm{mor}}^n}&C_{\mathrm{mor}}^{n+1}(\phi, {} _\rhd\psi_\lhd)&
}
\]
\end{prop}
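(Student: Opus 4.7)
The plan is to verify the chain map identity $\Theta^{n+1}\circ\delta_{\mathrm{mor}}^n = \delta_{\mathrm{mor}}^n\circ\Theta^n$ componentwise on a general cochain $(f,g,h)\in C_{\mathrm{Alg}}^n(A,M)\oplus C_{\mathrm{Alg}}^n(B,N)\oplus C_{\mathrm{Alg}}^{n-1}(A,N)$. Expanding $\Theta^{n+1}\circ\delta_{\mathrm{mor}}^n(f,g,h)$ gives
$$\bigl(\Phi_{A,M}^{n+1}(\delta_{\mathrm{Alg}}^n f),\ \Phi_{B,N}^{n+1}(\delta_{\mathrm{Alg}}^n g),\ \Phi_{A,N}^n(\psi\circ f - g\circ\phi^{\otimes n} - \delta_{\mathrm{Alg}}^{n-1} h)\bigr),$$
while $\delta_{\mathrm{mor}}^n\circ\Theta^n(f,g,h)$ gives the analogous triple with $\Phi$ applied first; on the target side the Hochschild-type differentials are computed with respect to the deformed bimodule actions $\rhd,\lhd$ coming from Lemma~\ref{Prop: new-bimodule} and Theorem~\ref{thm:triang action Module-morphism}.

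The first two components reduce immediately to the chain map property of $\Phi_{A,M}^\bullet$ and $\Phi_{B,N}^\bullet$, which is the content of the Nijenhuis algebra cohomology construction recalled in Section~\ref{sec:3} after \cite{das1,Song}; I would invoke this as a known fact rather than reprove it. For the third component the required identity reads
$$\Phi_{A,N}^n(\psi\circ f - g\circ\phi^{\otimes n} - \delta_{\mathrm{Alg}}^{n-1}h) = {}_\rhd\psi_\lhd\circ\Phi_{A,M}^n(f) - \Phi_{B,N}^n(g)\circ\phi^{\otimes n} - \delta_{\mathrm{Alg}}^{n-1}\Phi_{A,N}^{n-1}(h).$$
By linearity of $\Phi_{A,N}^n$ this splits into three sub-identities, the one for $\delta_{\mathrm{Alg}}^{n-1}$ being again the chain map property of $\Phi_{A,N}^\bullet$. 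What remains are the two compatibilities
$$\Phi_{A,N}^n(\psi\circ f) = \psi\circ\Phi_{A,M}^n(f), \qquad \Phi_{A,N}^n(g\circ\phi^{\otimes n}) = \Phi_{B,N}^n(g)\circ\phi^{\otimes n}.$$

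Both will be proven by unpacking the explicit formula \eqref{cochai}. For the first identity, $\psi$ commutes past each factor $P_M^{n-k}$ in a summand by iterated application of the Nijenhuis bimodule morphism relation $\psi\circ P_M = P_N\circ\psi$, producing $P_N^{n-k}\circ(\psi\circ f)$; the inner arguments $P_A(x_{i_j})$ are unaffected, so the two sums indexed by $(i_1,\dots,i_k)$ match term by term. For the second identity, applying $\phi^{\otimes n}$ to the arguments and using $\phi\circ P_A = P_B\circ\phi$ transforms each $P_A(x_{i_j})$ into $P_B(\phi(x_{i_j}))$ while the outer $P_N^{n-k}$ is unchanged, so again the two indexed sums coincide term by term.

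The main obstacle I expect is purely bookkeeping: one must keep careful track of which Hochschild differential is in force at each step, since the source complex uses the original bimodule structures on $(M,N)$ whereas the target uses the deformed structures $({}_\rhd M_\lhd,{}_\rhd N_\lhd)$ from Lemma~\ref{Prop: new-bimodule}. Once this is noted, the verification reduces, as outlined, to the chain map property of $\Phi^\bullet$ on each algebra-level component together with iterated use of the intertwining relations $\psi\circ P_M = P_N\circ\psi$ and $\phi\circ P_A = P_B\circ\phi$; beyond this, no essentially new computation is needed.
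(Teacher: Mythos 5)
Your proposal is correct and follows essentially the same route as the paper's proof: reduce the first two components and the $\delta_{\mathrm{Alg}}$-part of the third to the chain map property of $\Phi^\bullet$, then verify the two compatibilities $\Phi_{A,N}^n(\psi\circ f)=\psi\circ\Phi_{A,M}^n(f)$ and $\Phi_{A,N}^n(g\circ\phi^{\otimes n})=\Phi_{B,N}^n(g)\circ\phi^{\otimes n}$ by expanding formula \eqref{cochai} and using $\psi\circ P_M=P_N\circ\psi$ and $\phi\circ P_A=P_B\circ\phi$ term by term. Your explicit remark that the target complex carries the deformed differential $\delta_{\mathrm{NjO}}$ rather than $\delta_{\mathrm{Alg}}$ is exactly the bookkeeping point the paper's displayed computation encodes.
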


\begin{proof}
Only need to show
$\Theta^{n+1} \circ \delta_{\mathrm{mor}}^n(f,g,h)=\delta_{\mathrm{mor}}^n \circ \Theta^n(f,g,h)$, for all $(f,g,h)\in C_{\mathrm {mor} } ^n(\phi,\psi)$, i.e.,
\begin{align*}
&\Theta^{n+1} \circ \delta_{\mathrm{mor}}^n(f,g,h)\\
=&~(\Phi_{A,M}^{n+1}(\delta_{\mathrm{Alg}}^n(f)),\Phi_{B,N}^{n+1}(\delta_{\mathrm{Alg}}^n(g)),
\Phi_{A,N}^{n}(\psi\circ f)-\Phi_{A,N}^{n}( g\circ \phi^{\otimes n})-\Phi_{A,N}^{n}(\delta_{\mathrm{Alg}}^{n-1}h))\\
=&~(\delta_\mathrm{NjO}^n(\Phi_{A,M}^{n}(f)),\delta_\mathrm{NjO} ^n(\Phi_{B,N}^{n}(g)),\psi\circ
(\Phi_{A,M}^{n}(f))-(\Phi_{B,N}^{n}(g))\circ \phi^{\otimes n}-\delta_\mathrm{NjO} ^{n-1}(\Phi_{A,N}^{n-1}(h)))\\
=&~\delta_{\mathrm{mor}}^n \circ \Theta^n(f,g,h).
\end{align*}
Since $\Phi^\bullet$ is a chain map, we only need to observe that,
from $P _N\circ \psi=\psi \circ P _M$, we have
$$\begin{aligned}
&~~~~\Phi_{A,N}^n(\psi \circ f)(x_1, \dots, x_n)\\
	&= \sum_{k=0}^n \sum_{1\leqslant i_1<i_2<\dots<i_k\leqslant n} (-1)^{n-k}
	P_N^{n-k} \circ (\psi \circ  f)(x_{1, i_1-1}, P_A(x_{i_1}), x_{i_1+1, i_2-1}, P_A(x_{i_2}), \dots, P_A(x_{i_k}), x_{i_k+1, n})\\
	&= \psi \Big(\sum_{k=0}^n \sum_{1\leqslant i_1<i_2<\dots<i_k\leqslant n} (-1)^{n-k}
	P_M^{n-k} \circ f(x_{1, i_1-1}, P_A(x_{i_1}), x_{i_1+1, i_2-1}, P_A(x_{i_2}), \dots, P_A(x_{i_k}), x_{i_k+1, n})\Big)\\
  &= \psi \circ (\Phi_{A,M}^n(f)) (x_1,\dots,x_{n}),
   \end{aligned}$$
and from $\phi \circ P _A =P _B \circ \phi$, we have
$$\begin{aligned}
&~~~~\Phi^n_{A,N}(g \circ \phi^{\otimes n})(x_1,\dots,x_{n})\\
&= \sum_{k=0}^n \sum_{1\leqslant i_1<i_2<\dots<i_k\leqslant n} (-1)^{n-k}
	P_N^{n-k} \circ (g \circ \phi^{\otimes n})(x_{1, i_1-1}, P_A(x_{i_1}), x_{i_1+1, i_2-1}, P_A(x_{i_2}), \dots, P_A(x_{i_k}), x_{i_k+1, n})\\
    &= \sum_{k=0}^n \sum_{1\leqslant i_1<i_2<\dots<i_k\leqslant n} (-1)^{n-k}
	P_N^{n-k} \circ g (\phi^{\otimes (i_1-1)}( x_{1, i_1-1}), \phi(P_A(x_{i_1})), \phi^{\otimes (i_2-i_1-1)}(x_{i_1+1, i_2-1}), \phi(P_A(x_{i_2})),  \\
&~~~~\dots, \phi(P_A(x_{i_k})),\phi^{\otimes (n-i_k)}( x_{i_k+1, n}))\\
    &= \sum_{k=0}^n \sum_{1\leqslant i_1<i_2<\dots<i_k\leqslant n} (-1)^{n-k}
	P_N^{n-k} \circ g (\phi^{\otimes (i_1-1)}( x_{1, i_1-1}), P_B(\phi(x_{i_1}), \phi^{\otimes (i_2-i_1-1)}( x_{i_1+1, i_2-1}), P_B(\phi(x_{i_2})),   \\
&~~~~\dots,P_B(\phi(x_{i_k})),\phi^{\otimes (n-i_k)}( x_{i_k+1, n}))\\
&=(\Phi^n_{B,N}(g)) \circ \phi^{\otimes n}(x_1,\dots,x_{n}).
\end{aligned}$$
\end{proof}
To conclude this section, we define cohomology of Nijenhuis algebra morphisms with coefficients in their bimodule.
\begin{defn}\label{defi: cohomology of diff morphism}
Let $\Theta^\bullet: C_{\mathrm{mor}}^\bullet(\phi,\psi)\rightarrow C_{\mathrm{mor}}^\bullet(\phi, {} _\rhd \psi_\lhd)$ be a chain map defined in Proposition
{\ref{pro:chain map}}. We may define a cochain complex $C_{\mathrm{NjM}}^\bullet(\phi,\psi)$ to be the negative shift of the
 mapping cone of $\Theta^{\bullet}$, that is,
\begin{enumerate}
\item[(i)] $C_{\mathrm{NjM}}^0(\phi,\psi):=C_{\mathrm{mor}}^0(\phi,\psi)$,
\item[(ii)]when $n\geqslant 1$, $C_{\mathrm{NjM}}^n(\phi,\psi):=C_{\mathrm{mor}}^n(\phi,\psi)\oplus
C_{\mathrm{mor}}^{n-1}(\phi, {} _\rhd \psi_\lhd)$,
\end{enumerate}
and the coboundary operator
$\mathrm{D}^n :C_{\mathrm{NjM}}^n(\phi,\psi)\rightarrow C_{\mathrm{NjM}}^{n+1}(\phi,\psi)$ is defined by\\
when $n=1$,
\begin{align*}
\mathrm{D}^1\Big((f ,g ,h),(m_1,m_2)\Big)
:=&\Big(\delta_{\mathrm{mor}}^1(f ,g ,h ), \delta_{\mathrm{mor}}^0(m_1, m_2 )-\Theta^1(f ,g ,h)\Big)\\\nonumber
:=&\Big((\delta_{\mathrm{Alg}}^1(f ),\delta_{\mathrm{Alg}}^1(g ),\psi\circ f -g \circ\phi),\\
&~(-\Phi_{A,M}^1 (f ),-\Phi_{B,N}^1 (g ), \psi(m_1)-m_2-\Phi^0_{A,N}(h) \Big),
\end{align*}
when $n\geqslant 2$,
\begin{align*}
\mathrm{D}^n\Big((f _1,g _1,h _1),(f _2,g _2,h _2)\Big)
:=&\Big(\delta_{\mathrm{mor}}^n(f _1,g _1,h _1),\delta_{\mathrm{mor}}^{n-1}(f _2,g _2,h _2)+(-1)^n\Theta^n(f _1,g _1,h _1)\Big)\\ \nonumber
:=&\Big((\delta_{\mathrm{Alg}}^n(f _1),\delta_{\mathrm{Alg}}^n(g _1),\psi\circ f _1-g _1\circ \phi ^{\otimes n}
-\delta_{\mathrm{Alg}}^{n-1}(h _1)),\\ \nonumber
&(\delta_{\mathrm{NjO}}^{n-1}(f _2)+(-1)^n\Phi_{A,M}^n (f _1) , \delta_{\mathrm{NjO}}^{n-1}(g _2)
+(-1)^n\Phi_{B,N}^n (g _1) , \\ \nonumber
&\psi\circ f _2-g _2\circ \phi^{\otimes n-1}-\delta_{\mathrm{NjO}}^{n-2}(h _2)+(-1)^{n}
\Phi_{A,N}^{n-1}(h _1)) \Big),
\end{align*}
for $(f _1,g _1,h _1)\in C_{\mathrm{mor}}^n(\phi,\psi), (f _2,g _2,h _2)\in C_{\mathrm{mor}}^{n-1}(\phi, {}_\rhd\psi_\lhd)$. 
where
$\Phi^\bullet$ is defined by Eq. \eqref{cochai}.
\end{defn}
\begin{prop}
With the above notations, we have $\mathrm{D}^{n+1} \circ \mathrm{D}^n=0$.
\end{prop}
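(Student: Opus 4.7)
The plan is to observe that $(C_{\mathrm{NjM}}^\bullet(\phi, \psi), \mathrm{D}^\bullet)$ is, up to signs, the shifted mapping cone of the chain map $\Theta^\bullet$, so the identity $\mathrm{D}^{n+1} \circ \mathrm{D}^n = 0$ reduces to the standard mapping cone computation. The three inputs I would marshal are: first, the Gerstenhaber--Schack complex $(C_{\mathrm{mor}}^\bullet(\phi, \psi), \delta_{\mathrm{mor}}^\bullet)$ is a cochain complex, which is classical; second, the target complex $(C_{\mathrm{mor}}^\bullet(\phi, {}_\rhd \psi_\lhd), \delta_{\mathrm{mor}}^\bullet)$ is also a cochain complex, using Remark \ref{remark morphism module} (which says $\langle {}_\rhd M_\lhd, {}_\rhd N_\lhd, {}_\rhd \psi_\lhd\rangle$ is still a $\phi$-bimodule) together with $\delta_{\mathrm{NjO}}^2 = 0$ from the Nijenhuis operator cochain complex; and third, the chain map property $\Theta^{n+1} \circ \delta_{\mathrm{mor}}^n = \delta_{\mathrm{mor}}^n \circ \Theta^n$ already established in Proposition \ref{pro:chain map}.

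For $n \geq 2$, I would expand
\begin{align*}
\mathrm{D}^{n+1}\mathrm{D}^n\bigl((f_1,g_1,h_1),(f_2,g_2,h_2)\bigr)
= \Bigl(&\delta_{\mathrm{mor}}^{n+1}\delta_{\mathrm{mor}}^{n}(f_1,g_1,h_1),\\
&\delta_{\mathrm{mor}}^{n}\delta_{\mathrm{mor}}^{n-1}(f_2,g_2,h_2)\\
&\quad +(-1)^n\delta_{\mathrm{mor}}^{n}\Theta^n(f_1,g_1,h_1)+(-1)^{n+1}\Theta^{n+1}\delta_{\mathrm{mor}}^{n}(f_1,g_1,h_1)\Bigr).
\end{align*}
The first slot vanishes by ingredient one; the $\delta^2$ term in the second slot vanishes by ingredient two; and the two mixed terms cancel against each other because their signs $(-1)^n$ and $(-1)^{n+1}$ are opposite while $\Theta$ commutes with the differentials. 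The boundary case $n=1$ is handled by the same mechanism, using $\mathrm{D}^0$, the identity $\Theta^0 = \mathrm{id}$, and the explicit minus sign appearing in the formula for $\mathrm{D}^1$ (which plays the role of $(-1)^1$ in the general pattern).

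The main obstacle is bookkeeping rather than anything conceptual. In particular, the symbol $\delta_{\mathrm{mor}}$ is realized by $\delta_{\mathrm{Alg}}$ on the first summand but by $\delta_{\mathrm{NjO}}$ on the second (cf.\ the unfolded formula for $\mathrm{D}^n$), so the verification that $\delta_{\mathrm{mor}}^{\bullet}$ squares to zero on $C_{\mathrm{mor}}^\bullet(\phi,{}_\rhd\psi_\lhd)$ ultimately reduces to the fact that $\delta_{\mathrm{NjO}}^2=0$ together with the morphism identities $\phi\circ P_A=P_B\circ\phi$ and $\psi\circ P_M=P_N\circ\psi$. Once these are checked, all remaining sign-tracking is routine and parallels the proof that the Nijenhuis algebra cochain complex of Definition \ref{Def: definition of Nijenhuis cohomology} satisfies $(\delta_{\mathrm{NjA}})^2=0$.
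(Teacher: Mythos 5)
Your proposal is correct and is essentially the paper's own argument in cleaner packaging: the paper defines $C_{\mathrm{NjM}}^\bullet(\phi,\psi)$ as the shifted mapping cone of $\Theta^\bullet$ and its proof is just the fully expanded version of your computation, relying on the same three ingredients (the source and target $\delta_{\mathrm{mor}}$-complexes squaring to zero and Proposition~\ref{pro:chain map}). In particular, the paper isolates exactly the compatibility facts you flag as the only nontrivial bookkeeping, namely $\psi\circ\delta_{\mathrm{NjO}}^{n-1}(f_2)=\delta_{\mathrm{NjO}}^{n-1}(\psi\circ f_2)$ (via Theorem~\ref{thm:triang action Module-morphism}) and the analogous identity for $g_2\circ\phi^{\otimes(n-1)}$.
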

\begin{proof}
For $(f _1,g _1,h _1)\in \mathrm{C}_{\mathrm{mor}}^n(\phi,\psi), (f _2,g _2,h _2)\in \mathrm{C}_{\mathrm{mor}}^{n-1}(\phi, {} _\rhd\psi_\lhd)$, we have
$$\begin{aligned}
&~~~~\mathrm{D}^{n+1} \circ \mathrm{D}^n\Big((f _1,g _1,h _1),(f _2,g _2,h _2)\Big)\\
&=\mathrm{D}^{n+1}\Big((\delta_{\mathrm{Alg} }^n(f _1),\delta_{\mathrm{Alg} }^n(g _1),\psi\circ f _1-g _1\circ \phi^{\otimes n}
-\delta_{\mathrm{Alg} }^{n-1}(h _1)\big),\\
&~~~~(\delta_\mathrm{NjO}^{n-1}(f _2)+(-1)^n\Phi_{A,M }^n (f _1) , \delta_\mathrm{NjO}^{n-1}(g _2)
+(-1)^n\Phi_{B,N }^n (g _1) , \\
&~~~~\psi\circ f _2-g _2\circ\phi^{\otimes n-1}-\delta_\mathrm{NjO}^{n-2}(h _2)+(-1)^{n}
\Phi_{A,N }^{n-1}(h _1)) \Big),\\
&=\Big((\delta_{\mathrm{Alg}}^{n+1}(\delta_{\mathrm{Alg}}^{n}(f _1)),\delta_{\mathrm{Alg}}^{n+1}(\delta_{\mathrm{Alg}}^{n}(g _1)),
\psi \circ \delta_{\mathrm{Alg}}^{n}(f _1)-\delta_{\mathrm{Alg} }^{n}(g _1)\circ \phi^{\otimes n+1}\\
&-\delta_{\mathrm{Alg}}^{n}(\psi \circ f _1)
+\delta_{\mathrm{Alg}}^{n}(g _1\circ \phi^{\otimes n})
+\delta_{\mathrm{Alg}}^{n}(\delta_{\mathrm{Alg}}^{n-1}(h _1))),\\
&~~~~(\delta_\mathrm{NjO}^n(\delta_\mathrm{NjO}^{n-1}(f _2))+(-1)^n\delta_{\mathrm{NjO}}^n(\Phi^n_{A,M }(f _1))
+(-1)^{n+1}\Phi_{A,M }^{n+1}(\delta_{\mathrm{Alg}}^n(f_1)),\\
&~~~~\delta_{\mathrm{NjO}}^n(\delta_\mathrm{NjO}^{n-1}(g_2))
+(-1)^n\delta_\mathrm{NjO}^n(\Phi^n_{B,N }(g _1))
+(-1)^{n+1}\Phi^{n+1}_{B,N}(\delta_{\mathrm{Alg}}^n(g _1)),\\
&~~~~\psi\circ( \delta_\mathrm{NjO}^{n-1}(f _2))+(-1)^n\psi \circ
(\Phi^n_{A,M }(f _1))-\delta_\mathrm{NjO}^{n-1}(g _2)\circ
\phi^{\otimes n}-(-1)^n\Phi^n_{B,N }(g _1)\circ \phi^{\otimes n}\\
&-\delta_\mathrm{NjO}^{n-1}(\psi\circ f _2)+\delta_\mathrm{NjO}^{n-1}(g _2\circ \phi^{\otimes n-1})
+\delta_\mathrm{NjO}^{n-1}(\delta_\mathrm{NjO}^{n-2}(h _2))-(-1)^{n}\delta_\mathrm{NjO}^{n-1}(\Phi_{A,N }^{n-1}
(h _1))\\
&+(-1)^{n+1}\Phi^n_{A,N }(\psi \circ f _1)-(-1)^{n+1}\Phi_{A,N}^n(g _1 \circ \phi^{\otimes n})-(-1)^{n+1} \Phi_{A,N }^n
(\delta_{\mathrm{Alg}}^{n-1}(h _1)))\Big).
\end{aligned}$$
To finish the proof, 
according to \cite[Theorem 2.4]{Mak}, we have 
$\psi \circ (\delta_{\mathrm{Alg}}^{n} (f_1))=\delta_{\mathrm{Alg}}^{n}  (\psi \circ f_1)$ and 
$(\delta_{\mathrm{Alg}}^{n} (g_1)) \circ \phi^{\otimes n+1}=
\delta_{\mathrm{Alg}}^{n}(
g_1\circ \phi^{\otimes n})$. Thus,
we only need to observe that, 
from Theorem \ref{thm:triang action Module-morphism}
it follows that
$\psi \circ (\delta_{\mathrm{NjO}}^{n-1} (f_2))=\delta_{\mathrm{NjO}}^{n-1}  (\psi \circ f_2).$ 
Additionally, from
$g_2 \circ \phi (x_1,\dots,x_{n})=g_2 \circ (\phi(x_2),\dots,\phi(x_{n}))$, we deduce that $(\delta_{\mathrm{NjO}}^{n-1} (g_2)) \circ \phi^{\otimes n}=
\delta_{\mathrm{NjO}}^{n-1}(
g_2\circ \phi^{\otimes n-1})$. 
\end{proof}
It follows from the above proposition that $\{C_{\mathrm{NjM}} ^\bullet(\phi,\psi),\mathrm{D}^\bullet\}$
is a \textit{cochain complex}. Let $\mathrm{Z}_{\mathrm{NjM}}^n(\phi,\psi)$ denote the \textit{space of $n$-cocycles} and
$\mathrm{B}_{\mathrm{NjM}}^n(\phi,\psi)$ denote the \textit{space of $n$-coboundaries}. Then we have
$\mathrm{B}_{\mathrm{NjM}}^n(\phi,\psi) \subset \mathrm{Z}_{\mathrm{NjM}}^n(\phi,\psi)$.
The corresponding quotient
$$\mathrm{H}_{\mathrm{NjM}}^n(\phi,\psi):=\frac{\mathrm{Z}_{\mathrm{NjM}}^n(\phi,\psi)}{\mathrm{B}_{\mathrm{NjM}}^n(\phi,\psi)},~~\text{for}~~n\geq0$$

are called the \textit{cohomology groups of the Nijenhuis algebra morphisms
$\phi:(A , P_A ) \to (B, P_B)$ with
coefficients in the bimodule $\langle (M,P _M), (N, P _N), {}_\rhd\psi_\lhd\rangle $}.\\

In the following, we provide a sufficient condition for the vanishing of the cohomology groups $\mathrm{H}^\bullet_\mathrm{NjM}(\phi, \psi)$
in terms of the vanishing of some cohomology groups of Nijenhuis algebras.
\begin{prop}\label{vanish-prop}
If $\mathrm{H}^n_\mathrm{NjA} (A, M)$, $\mathrm{H}^n_\mathrm{NjA} (B, N)$ and $\mathrm{H}^{n-1}_\mathrm{NjA} (A, N)$ are trivial, so is $\mathrm{H}^n_\mathsf{NjM} (\phi, \psi)$.
\end{prop}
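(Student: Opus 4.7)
The plan is to identify the cochain complex $C^\bullet_{\mathrm{NjM}}(\phi,\psi)$ with (a shift of) the mapping cone of a natural chain map between Nijenhuis algebra cochain complexes, and then read off the conclusion from the resulting long exact sequence. Unpacking Definition \ref{defi: cohomology of diff morphism} for $n\geqslant 2$ gives
\[
C^n_{\mathrm{NjM}}(\phi,\psi) = C^n_{\mathrm{Alg}}(A,M)\oplus C^n_{\mathrm{Alg}}(B,N)\oplus C^{n-1}_{\mathrm{Alg}}(A,N)\oplus C^{n-1}_{\mathrm{Alg}}(A,M)\oplus C^{n-1}_{\mathrm{Alg}}(B,N)\oplus C^{n-2}_{\mathrm{Alg}}(A,N).
\]
Since $C^k_{\mathrm{NjO}}(X,Y)=\mathrm{Hom}(X^{\otimes k},Y)$ as vector spaces, regrouping the summands by their coefficient pair produces
\[
C^n_{\mathrm{NjM}}(\phi,\psi)\;\cong\;C^n_{\mathrm{NjA}}(A,M)\oplus C^n_{\mathrm{NjA}}(B,N)\oplus C^{n-1}_{\mathrm{NjA}}(A,N).
\]

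Next I would check that, under this identification, the coboundary $\mathrm{D}^n$ becomes (up to the usual sign conventions of a mapping cone) the total differential associated with the chain map
\[
\Psi^\bullet:\;C^\bullet_{\mathrm{NjA}}(A,M)\oplus C^\bullet_{\mathrm{NjA}}(B,N)\longrightarrow C^\bullet_{\mathrm{NjA}}(A,N),\qquad (F,G)\longmapsto \psi\circ F-G\circ\phi^{\otimes\bullet}.
\]
Indeed, the pieces of $\mathrm{D}^n$ that only involve $\delta^\bullet_{\mathrm{Alg}}$, $\delta^\bullet_{\mathrm{NjO}}$ and the twists $\Phi^\bullet$ are precisely the three internal differentials $\delta^n_{\mathrm{NjA}}$ on the summands $C^\bullet_{\mathrm{NjA}}(A,M)$, $C^\bullet_{\mathrm{NjA}}(B,N)$ and $C^\bullet_{\mathrm{NjA}}(A,N)$, while the remaining terms featuring $\psi$ and $\phi$ realize $\Psi^\bullet$. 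The fact that $\Psi^\bullet$ is genuinely a chain map reduces to the identities $P_N\circ\psi=\psi\circ P_M$ and $\phi\circ P_A=P_B\circ\phi$, both already exploited in the proof of Proposition \ref{pro:chain map}. With this identification in hand, the standard mapping cone long exact sequence reads
\[
\cdots\to \mathrm{H}^{n-1}_{\mathrm{NjA}}(A,N)\to \mathrm{H}^n_{\mathrm{NjM}}(\phi,\psi)\to \mathrm{H}^n_{\mathrm{NjA}}(A,M)\oplus \mathrm{H}^n_{\mathrm{NjA}}(B,N)\xrightarrow{\Psi^{*}} \mathrm{H}^n_{\mathrm{NjA}}(A,N)\to\cdots,
\]
so the vanishing of the three outer groups immediately forces $\mathrm{H}^n_{\mathrm{NjM}}(\phi,\psi)=0$ by exactness. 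The cases $n=0,1$ are handled by inspection of the low-degree pieces of Definition \ref{defi: cohomology of diff morphism}.

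The main obstacle will be purely bookkeeping: tracking the signs $(-1)^n$ that appear in the definitions of $\mathrm{D}^n$ and of the Nijenhuis differential $\delta^n_{\mathrm{NjA}}$, and checking that under the rearrangement above they assemble correctly into the mapping-cone differential of $\Psi^\bullet$. This is routine once one writes a change of basis that twists some summands by an appropriate power of $-1$, and no new algebraic identities beyond the compatibility of $\phi$ and $\psi$ with the Nijenhuis operators are needed.
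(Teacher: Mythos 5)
Your proposal is correct, and it reaches the conclusion by a more structural route than the paper. The paper proves the statement by an explicit cocycle chase: given an $n$-cocycle $\big((f_1,g_1,h_1),(f_2,g_2,h_2)\big)$, it uses the vanishing of $\mathrm{H}^n_{\mathrm{NjA}}(A,M)$ and $\mathrm{H}^n_{\mathrm{NjA}}(B,N)$ to write $(f_1,f_2)$ and $(g_1,g_2)$ as coboundaries $\delta^{n-1}_{\mathrm{NjA}}(f'_1,f'_2)$, $\delta^{n-1}_{\mathrm{NjA}}(g'_1,g'_2)$, then checks by hand that $\big(\psi\circ f'_1-g'_1\circ\phi^{\otimes n-1}-h_1,\ \psi\circ f'_2-g'_2\circ\phi^{\otimes n-2}-h_2\big)$ is an $(n-1)$-cocycle in $C^{\bullet}_{\mathrm{NjA}}(A,N)$, and finally uses the vanishing of $\mathrm{H}^{n-1}_{\mathrm{NjA}}(A,N)$ to exhibit the original cocycle as $\mathrm{D}^{n-1}$ of an explicit cochain. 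That chase is precisely the exactness argument at the middle term of the long exact sequence you invoke, so the two proofs rest on the same regrouping $C^n_{\mathrm{NjM}}(\phi,\psi)\cong C^n_{\mathrm{NjA}}(A,M)\oplus C^n_{\mathrm{NjA}}(B,N)\oplus C^{n-1}_{\mathrm{NjA}}(A,N)$ and on the same two compatibility facts ($\psi\circ\Phi^{\bullet}_{A,M}=\Phi^{\bullet}_{A,N}(\psi\circ-)$, $\Phi^{\bullet}_{B,N}(-)\circ\phi^{\otimes\bullet}=\Phi^{\bullet}_{A,N}(-\circ\phi^{\otimes\bullet})$, together with the analogous identities for $\delta_{\mathrm{Alg}}$ and $\delta_{\mathrm{NjO}}$) already established in Proposition \ref{pro:chain map}. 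What your packaging buys is the entire long exact sequence relating $\mathrm{H}^{\bullet}_{\mathrm{NjM}}(\phi,\psi)$ to the three Nijenhuis-algebra cohomologies, of which the proposition is one corollary; what the paper's version buys is that it never has to fix a global sign convention for the cone, since all signs are carried explicitly. Your one genuine obligation beyond the paper's proof is to actually verify that $\Psi^{\bullet}$ is a chain map for $\delta^{\bullet}_{\mathrm{NjA}}$ and that the sign twist $(f_2\mapsto(-1)^n f_2$, etc.$)$ turns $\mathrm{D}^{\bullet}$ into the cone differential of $\pm\Psi^{\bullet}$; you correctly identify this as routine bookkeeping, and it does go through, so there is no gap.
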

\begin{proof}
Let $\alpha=\big((f_1, g_1, h_1),(f_2, g_2, h_2)\big) \in \mathrm{Z}^n_\mathsf{NjM}(\phi, \psi)$ be an $n$-cocycle. 
Then by Definition \ref{defi: cohomology of diff morphism}
 and the hypothesis, one has that 
 $(f_1,f_2)=\delta^{n-1}_\mathrm{NjA} (f'_1,f'_2)$ and $(g_1,g_2)=\delta^{n-1}_\mathrm{NjA} (g'_1,g'_2)$
for some
$(n-1)$-cochains $(f'_1,f'_2)\in \mathrm{C} _{\mathrm{NjA}}^{n-1}(A ,M)$ and $(g'_1,g'_2)\in \mathrm{C}_{\mathrm{NjA}}^{n-1}(B ,N)$. 
Since  
$\mathrm{D}^n= 0$, via \cite[Proposition 3.3]{Yau} that:
$$
0=\delta_\mathrm{NjO}^{n-1}\Big(
\psi \circ f'_1- g'_1\circ \phi^{\otimes n-1}-h _1 \Big).$$
Furthermore, 
it suffices to prove that:
\begin{align*}
0=&\psi\circ f _2-g _2\circ \phi^{\otimes n-1}+\delta_\mathrm{NjO}^{n-2}(h _2)-
\Phi_{A ,N}^{n-1}(h _1) \\
=&\psi\circ \Big(-\delta_\mathrm{NjO}^{n-2}(f' _2)-\Phi_{A ,M}^{n-1}(f'_1)\Big)-\Big(-\delta_\mathrm{NjO}^{n-2}(g '_2)-\Phi_{B ,N}^{n-1}(g'_1)\Big)\circ \phi^{\otimes n-1}\\
&+\delta_\mathrm{NjO}^{n-2}(h _2)-
\Phi_{A ,N}^{n-1}(h _1) \\
=&-\psi\circ \delta_\mathrm{NjO}^{n-2}(f'_2)-\psi\circ \Phi_{A ,M}^{n-1}(f'_1)+\delta_\mathrm{NjO}^{n-2}(g'_2)\circ \phi^{\otimes n-1}+\Phi_{B ,N}^{n-1}(g'_1)\circ \phi^{\otimes n-1}\\
&+\delta_\mathrm{NjO}^{n-2}(h _2)-
\Phi_{A ,N}^{n-1}(h _1) \\
=&-\delta_\mathrm{NjO}^{n-2}(\psi \circ f'_2)-\Phi_{A ,N}^{n-1}(\psi \circ f'_1)+\delta_\mathrm{NjO}^{n-2}(g'_2\circ \phi^{\otimes n-2})+\Phi_{A ,N}^{n-1}(g'_1\circ \phi^{\otimes n-1})\\
&+\delta_\mathrm{NjO}^{n-2}(h _2)-
\Phi_{A ,N}^{n-1}(h _1)\\
=&-\delta_\mathrm{NjO}^{n-2}\Big(\psi \circ f '_2-g'_2\circ \phi^{\otimes n-2}-h _2\Big)-
\Phi_{A ,N}^{n-1}\Big(\psi \circ f'_1- g'_1\circ \phi^{\otimes n-1}-h _1 \Big).
\end{align*}
In other words, 
$(\psi \circ f'_1- g'_1\circ \phi^{\otimes n-1}-h _1,\psi \circ f '_2-g'_2\circ \phi^{\otimes n-2}-h _2)$
is an $(n-1)$-cocycle.
It follows from the hypothesis
that $(\psi \circ f'_1- g'_1\circ \phi^{\otimes n-1}-h _1,\psi \circ f '_2-g'_2\circ \phi^{\otimes n-2}-h _2)=\delta^{n-2}_\mathrm{NjA} (h'_1,h'_2)$
for some $(n-2)$-cochain $(h'_1,h' _2)\in C^{n-2}_\mathrm{NjA} (A, N)$
and, hence $\alpha=\mathrm{D}^{n-1}\big((f'_1, g'_1, h'_1),(f'_2, g'_2, h'_2)\big)$.
\end{proof}

\smallskip

\section{Deformations of Nijenhuis algebra morphisms}\label{sec:4}
In this section, we study formal deformations of a Nijenhuis algebra morphisms and consider the
rigidity of Nijenhuis algebra morphisms.

Let $(A ,\mu_A ,P_A )$ and $(B ,\mu_B ,P_B )$
be Nijenhuis algebras and
$\phi:(A,P_A)\rightarrow (B,P_B)$ be a morphism of Nijenhuis algebras.
For $\mathfrak{X}=\{A ,B \}$, define
\begin{eqnarray*}
\mu_{\mathfrak{X},t}&:=&\sum_{i=0}^{\infty}\mu_{\mathfrak{X},i} t^i,~~~\mu_{\mathfrak{X},0}=\mu_{\mathfrak{X}},\\
P_{\mathfrak{X},t}&:=&\sum_{i=0}^{\infty}P_{\mathfrak{X},i} t^i,~~~P_{\mathfrak{X},0}=P_{\mathfrak{X}},\\
\phi_{t}&:=&\sum_{i=0}^{\infty}\phi_{i} t^i,~~~\phi_{0}=\phi, 
\end{eqnarray*}
then, $\phi_t:(\mu_{A ,t},P_{A ,t})\to (\mu_{B ,t},P_{B ,t})$ is called a
\textit{one-parameter formal deformation} of
 $\phi:(A,P_A) \to (B,P_B)$, if
 $(A [\![t]\!],\mu_{A ,t},P_{A ,t})$,
 $(B [\![t]\!],\mu_{B ,t},P_{B ,t})$
 are $\mathrm{k} [\![t]\!]$-Nijenhuis algebras and
 $\phi_{t}: A [\![t]\!]\rightarrow B [\![t]\!]$
is a morphism of Nijenhuis algebras.\\
A power series $\phi_t:(\mu_{A ,t},P_{A ,t})\to (\mu_{B ,t},P_{B ,t})$ is a
one-parameter formal deformation of
$\phi:(A,P_A) \to (B,P_B)$ if and only if the
following equations hold :
\begin{eqnarray*}
 \mu_{A ,t}(x_1 \otimes \mu_{A ,t}(x_2\otimes x_3))
 &=&\mu_{A ,t}(\mu_{A ,t}(x_1\otimes x_2)\otimes x_3),\\
 \mu_{A ,t}(P_{A ,t}(x_1)\otimes P_{A ,t}(x_2))
 &=&P_{A ,t}\Big(\mu_{A ,t}(P_{A ,t}(x_1)\otimes x_2)+\mu_{A ,t}(x_1 \otimes P_{A ,t}(x_2))-P_{A ,t}(x_1\otimes x_2)\Big),\\
 \mu_{B ,t}(y_1 \otimes\mu_{B ,t}(y_2 \otimes y_3))
 &=&\mu_{B ,t}(\mu_{B ,t}(y_1 \otimes y_2)\otimes y_3),\\
 \mu_{B ,t}(P_{B ,t}(y_1)\otimes P_{B ,t}(y_2))
 &=&P_{B ,t}\Big(\mu_{B ,t}(P_{B ,t}(y_1)\otimes y_2)+\mu_{B ,t}(y_1 \otimes P_{B ,t}(y_2))-P_{B ,t}(y_1\otimes y_2)\Big),\\
 \phi_{t} \circ \mu_{A ,t}(x_1 \otimes  x_2)
 &=&\mu_{B ,t}\circ (\phi_{t}(x_1)\otimes \phi_{t}(x_2)),\\
\phi_{t}\circ P_{A ,t}(x_1)&=& P_{B ,t} \circ \phi_{t}(x_1),
 \end{eqnarray*}
for all $x_1,x_2,x_3\in A $, $y_1,y_2,y_3\in B $.
By expanding these equations and comparing the coefficients of $t^n$, the following equations hold:
For any
$n\geqslant 0$,
\begin{align}
\sum_{i=0}^n\mu_{\mathfrak{X},i} \circ (\mathrm{id} \otimes \mu_{\mathfrak{X},n-i})=&\sum_{i=0}^n \mu_{\mathfrak{X},i}  \circ (\mu_{\mathfrak{X},n-i} \otimes \mathrm{id}),
\label{Eq: deform eq for  products in DA}\\
\sum_{\substack{i+j+k=n \\ i,j,k \geq 0}} \mu_{\mathfrak{X},i}\circ(P_{\mathfrak{X},j}\otimes P_{\mathfrak{X},k})=&\sum_{\substack{i+j+k=n \\ i,j,k \geq 0}}P_{\mathfrak{X},i}\circ (\mu_{\mathfrak{X},j}(P_{\mathfrak{X},k}\otimes \mathrm{id}))+\sum_{\substack{i+j+k=n \\ i,j,k \geq 0}}P_{\mathfrak{X},i}\circ (\mu_{\mathfrak{X},j}(\mathrm{id} \otimes P_{\mathfrak{X},k}))\\
&-\sum_{\substack{i+j+k=n \\ i,j,k \geq 0}}P_{\mathfrak{X},i}\circ P_{\mathfrak{X},j}\circ(\mu_{\mathfrak{X},k}(\mathrm{id} \otimes \mathrm{id})) \nonumber,\\
\sum_{i+j=n \atop i, j\geqslant 0}\phi_i \circ \mu_{A ,j}
=&\sum_{i+j+k=n \atop i, j, k \geqslant 0}\mu_{B ,i} \circ 
 (\phi_j \otimes \phi_k),\label{Eq: deform eq for  morphism of multipication}\\
\sum_{i=0}^n\phi_{i}\circ P_{A ,n-i}
=&\sum_{i=0}^nP_{B ,i}\circ \phi_{n-i},
\label{Eq: deform eq for  morphism of operators}
\end{align}
where $\mathfrak{X}=\{A ,B \}$.
\begin{prop}
Let $\phi_{t}:(\mu_{A ,t},P_{A ,t})\to (\mu_{B ,t},P_{B ,t})$ be a
one-parameter formal deformation of
$\phi: (A,P_A)\to (B,P_B)$, then
$\big((\mu_{A ,1},\mu_{B ,1},\phi_{1}),(P_{A ,1},P_{B ,1},0)\big)$
is a 2-cocycle in the cochain complex $C^\bullet_{\mathrm{NjM}}(\phi,\phi)$.
\end{prop}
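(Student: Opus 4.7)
The plan is to unwind the definition of $\mathrm{D}^2$ applied to the 2-cochain
$\alpha:=\big((\mu_{A,1},\mu_{B,1},\phi_{1}),(P_{A,1},P_{B,1},0)\big)\in C^{2}_{\mathrm{NjM}}(\phi,\phi)$ and check that each of its six components vanishes by extracting the coefficient of $t^{1}$ from the appropriate formal deformation equation. Concretely, by Definition \ref{defi: cohomology of diff morphism} with $n=2$, $\mathrm{D}^2\alpha$ has first triple
$\big(\delta^2_{\mathrm{Alg}}(\mu_{A,1}),\,\delta^2_{\mathrm{Alg}}(\mu_{B,1}),\,\phi\circ\mu_{A,1}-\mu_{B,1}\circ\phi^{\otimes 2}-\delta^{1}_{\mathrm{Alg}}(\phi_{1})\big)$
and second triple
$\big(\delta^{1}_{\mathrm{NjO}}(P_{A,1})+\Phi^{2}_{A,A}(\mu_{A,1}),\,\delta^{1}_{\mathrm{NjO}}(P_{B,1})+\Phi^{2}_{B,B}(\mu_{B,1}),\,\phi\circ P_{A,1}-P_{B,1}\circ\phi+\Phi^{1}_{A,B}(\phi_{1})\big).$
I will handle these six items in three parallel pairs.

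First I would treat the purely associative components. Taking Equation \eqref{Eq: deform eq for  products in DA} for $\mathfrak{X}=A$ and for $\mathfrak{X}=B$ at $n=1$ immediately yields $\delta^{2}_{\mathrm{Alg}}(\mu_{A,1})=0$ and $\delta^{2}_{\mathrm{Alg}}(\mu_{B,1})=0$; this is the standard Gerstenhaber calculation. For the morphism coordinate I would pick out the $t^{1}$ coefficient of \eqref{Eq: deform eq for  morphism of multipication}, namely $\phi\circ\mu_{A,1}+\phi_{1}\circ\mu_{A}=\mu_{B,1}\circ\phi^{\otimes 2}+\mu_{B}\circ(\phi_{1}\otimes\phi)+\mu_{B}\circ(\phi\otimes\phi_{1})$, and rearrange using the $A$-bimodule structure on $B$ induced by $\phi$ to recognise the right-hand side as $\mu_{B,1}\circ\phi^{\otimes 2}+\delta^{1}_{\mathrm{Alg}}(\phi_{1})$. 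For the last scalar coordinate, Equation \eqref{Eq: deform eq for  morphism of operators} at $n=1$ gives $\phi\circ P_{A,1}+\phi_{1}\circ P_{A}=P_{B}\circ\phi_{1}+P_{B,1}\circ\phi$, i.e. $\phi\circ P_{A,1}-P_{B,1}\circ\phi=-(\phi_{1}\circ P_{A}-P_{B}\circ\phi_{1})=-\Phi^{1}_{A,B}(\phi_{1})$, which is exactly what is required.

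The main obstacle, and the computation that warrants the most care, is the Nijenhuis part, i.e.\ showing $\delta^{1}_{\mathrm{NjO}}(P_{\mathfrak{X},1})+\Phi^{2}_{\mathfrak{X},\mathfrak{X}}(\mu_{\mathfrak{X},1})=0$ for $\mathfrak{X}\in\{A,B\}$. I would isolate the coefficient of $t$ in the Nijenhuis deformation equation and reorganise the sum by collecting the terms involving $\mu_{\mathfrak{X},1}$ (all other Nijenhuis data at order zero) and, separately, the terms involving $P_{\mathfrak{X},1}$ (all multiplications at order zero). The first collection yields exactly the expression
$\mu_{\mathfrak{X},1}(P_\mathfrak{X}\otimes P_\mathfrak{X})-P_\mathfrak{X}\mu_{\mathfrak{X},1}(P_\mathfrak{X}\otimes\mathrm{id})-P_\mathfrak{X}\mu_{\mathfrak{X},1}(\mathrm{id}\otimes P_\mathfrak{X})+P_\mathfrak{X}^{2}\mu_{\mathfrak{X},1}$,
which by the explicit formula \eqref{cochai} equals $\Phi^{2}_{\mathfrak{X},\mathfrak{X}}(\mu_{\mathfrak{X},1})$. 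The second collection unfolds, by the definition of $\delta^{1}_{\mathrm{NjO}}=-P_\mathfrak{X}\circ\delta^{1}_{\mathrm{Alg}}+\partial^{1}$ together with the deformed products $\cdot_{P}$ induced by Proposition \ref{newmultiplication} and the triangular actions of Lemma \ref{Prop: new-bimodule}, into precisely $\delta^{1}_{\mathrm{NjO}}(P_{\mathfrak{X},1})$ up to sign. Bookkeeping the signs carefully gives the desired identity on both $A$ and $B$.

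Having verified all six coordinates separately, I would conclude $\mathrm{D}^{2}\alpha=0$, so that $\alpha$ is a 2-cocycle in $C^{\bullet}_{\mathrm{NjM}}(\phi,\phi)$. The delicate step is purely the reconciliation of the degree-one Nijenhuis deformation identity with the definition of the Nijenhuis differential and the map $\Phi^{2}$; everything else is a direct transcription of the first order deformation equations.
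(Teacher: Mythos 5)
Your proposal is correct and follows essentially the same route as the paper: both expand $\mathrm{D}^2$ of the given $2$-cochain into its six components and identify each with the coefficient of $t^1$ in the corresponding formal deformation equation. The only difference is one of detail --- the paper disposes of the three associative components by citing Gerstenhaber--Schack and of the two Nijenhuis-operator components by citing Das, explicitly verifying only the final identity $-\phi\circ P_{A,1}+P_{B,1}\circ\phi-\Phi^{1}_{A,B}(\phi_{1})=0$ from the degree-one operator-compatibility equation, whereas you sketch the verification of all six components directly.
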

\begin{proof}
We only need to show that
\begin{align*}
&\mathrm{D}^2\big((\mu_{A ,1},\mu_{B ,1},\phi_{1}),(P_{A ,1},P_{B ,1},0)\big)\\
=&\big((\delta_{\mathrm{Alg}}^2(\mu_{A ,1}),\delta_{\mathrm{Alg}}^2(\mu_{B ,1}),\phi \circ \mu_{A ,1}-\mu_{B ,1}\circ \phi^{\otimes 2}-\delta_{\mathrm{Alg}}^{1}(\phi_{1})),\\\nonumber
&(-\delta_{\mathrm{NjO}}^{1}(P_{A ,1})-\Phi_{A }^2 (\mu_{A ,1}) ,-\delta_{\mathrm{NjO}}^{1}(P_{B ,1})-\Phi_B ^2 (\mu_{B ,1}) ,-
\phi\circ P_{A ,1}+P_{B ,1}\circ \phi-
\Phi_{A ,B }^{1}(\phi_{1}) \big)\\
=&0.
\end{align*}
Due to deformations of algebra morphisms~\cite{Gerst1} and deformations of Nijenhuis algebras~\cite{das1}, it suffices to show $-\phi\circ P_{A ,1}+P_{B ,1}\circ \phi-\Phi_{A ,B }^{1}(\phi_{1}) =0.$

When $n=1$, Eq.~(\ref{Eq: deform eq for  morphism of operators}) becomes
$$
\phi \circ P_{A ,1}+
\phi_{1}\circ P_{A }
=P_{B }\circ \phi_{1}+P_{B ,1}\circ \phi,$$
that is, $-\phi\circ  P_{A ,1}+P_{B ,1} \circ \phi-\Phi_{A ,B }^{1}(\phi_{1}) =0$,
so  $\big((\mu_{A ,1},\mu_{B ,1},\phi_{1}),(P_{A ,1},P_{B ,1},0)\big)$ is a $2$-cocycle.
\end{proof}
\begin{defn}
The $2$-cocycle $\big((\mu_{A ,1},\mu_{B ,1},\phi_{1}),(P_{A ,1},P_{B ,1},0)\big)$ is called an \textit{infinitesimal deformation}
 of the one-parameter formal deformation
 $\phi_{t}:(\mu_{A ,t},P_{A ,t})\to (\mu_{B ,t},P_{B ,t})$.
\end{defn}
Let $\phi_{t}:(\mu_{A ,t},P_{A ,t})\to (\mu_{B ,t},P_{B ,t})$ and $\phi'_{t}:(\mu'_{A ,t},P'_{A ,t})\to (\mu'_{B ,t},P'_{B ,t})$ be two
one-parameter formal deformations of $\phi:(A , P_{A }) \to (B , P_{B })$. Then, a \textit{formal isomorphism} from
$\phi_{t}:(\mu_{A ,t},P_{A ,t})\to (\mu_{B ,t},P_{B ,t})$ to $\phi'_{t}:(\mu'_{A ,t},P'_{A ,t})\to (\mu'_{B ,t},P'_{B ,t})$ are two power series
$(F _{A ,t},F _{B ,t})$ with
\begin{align*}
F _{A ,t}~~&:=~~\sum_{i=0}^{\infty}F _{A ,i}t^i:A [\![t]\!]\rightarrow A [\![t]\!],~~F _{A ,i}
\in \mathrm{Hom}(A ,A ), F _{A ,0}=\mathrm{id}_A , \\
F _{B ,t}~~&:=~~\sum_{i=0}^{\infty}F _{B ,i}t^i:B [\![t]\!]\rightarrow B [\![t]\!],~~F _{B ,i}
\in \mathrm{Hom}(B ,B ), F _{B ,0}=\mathrm{id}_B ,
\end{align*}
such that for $\mathfrak{X}=\{A ,B \}$
\begin{align*}
F _{\mathfrak{X},t}\circ \mu_{\mathfrak{X},t}~~&=~~\mu'_{\mathfrak{X},t}\circ (F _{\mathfrak{X},t} \otimes F _{\mathfrak{X},t}),\\
F _{\mathfrak{X},t}\circ  P_{\mathfrak{X},t}~~&=~~P'_{\mathfrak{X},t}\circ F _{\mathfrak{X},t},\\
F _{B ,t}\circ \phi_t ~~&=~~\phi'_t \circ  F _{A ,t}.
\end{align*}
Now, we justify the cohomology theory of Nijenhuis algebra by interpreting 
lower-degree cohomology groups as formal deformations.
\begin{prop}
The infinitesimal deformations of two \textit{equivalent} one-parameter formal deformations of $\phi: (A,P_A)\to (B,P_B)$ are in the same cohomology class of $\mathrm{H}_{\mathrm{NjM}}^\bullet(\phi, \phi) $.
\end{prop}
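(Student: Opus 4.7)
The plan is to mimic the Gerstenhaber--Schack argument for associative algebra morphisms, extended to include the operator and the morphism contributions. Starting from two equivalent formal deformations $\phi_t$ and $\phi'_t$ related by $(F_{A,t},F_{B,t})$ with $F_{\mathfrak{X},0}=\mathrm{id}$, I will expand each defining equation of a formal isomorphism in powers of $t$ and read off the coefficient of $t^1$. For $\mathfrak{X}\in\{A,B\}$, the equation $F_{\mathfrak{X},t}\circ\mu_{\mathfrak{X},t}=\mu'_{\mathfrak{X},t}\circ(F_{\mathfrak{X},t}\otimes F_{\mathfrak{X},t})$ at order one produces $\mu'_{\mathfrak{X},1}-\mu_{\mathfrak{X},1}=\delta^1_{\mathrm{Alg}}(-F_{\mathfrak{X},1})$; the equation $F_{\mathfrak{X},t}\circ P_{\mathfrak{X},t}=P'_{\mathfrak{X},t}\circ F_{\mathfrak{X},t}$ gives $P'_{\mathfrak{X},1}-P_{\mathfrak{X},1}=F_{\mathfrak{X},1}\circ P_{\mathfrak{X}}-P_{\mathfrak{X}}\circ F_{\mathfrak{X},1}=-\Phi^1(-F_{\mathfrak{X},1})$; and $F_{B,t}\circ\phi_t=\phi'_t\circ F_{A,t}$ yields $\phi'_1-\phi_1=\phi\circ(-F_{A,1})-(-F_{B,1})\circ\phi$.

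Next, I will compare these five identities with the coboundary formula from Definition \ref{defi: cohomology of diff morphism}. Precisely, I will consider the $1$-cochain $\eta:=\big((-F_{A,1},-F_{B,1},0),(0,0)\big)\in C^1_{\mathrm{NjM}}(\phi,\phi)$ (with the third slot lying in $C^0_{\mathrm{Alg}}(A,B)$, and the second pair in the degree-zero component) and apply $\mathrm{D}^1$. Its first component reproduces $(\mu'_{A,1}-\mu_{A,1},\mu'_{B,1}-\mu_{B,1},\phi'_1-\phi_1)$ by the three computations above; its second component gives $(-\Phi^1(-F_{A,1}),-\Phi^1(-F_{B,1}),\phi(0)-0-\Phi^0_{A,B}(0))=(P'_{A,1}-P_{A,1},P'_{B,1}-P_{B,1},0)$, using that $\Phi^0=\mathrm{id}$ kills the scalar $h$-slot once we take $h=0$. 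Hence the difference of the two infinitesimal $2$-cocycles is exactly $\mathrm{D}^1(\eta)$, which proves the claim.

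The calculations themselves are routine and mirror the analogous arguments by Gerstenhaber for $\mu_{\mathfrak{X},1}$, by Fr\'egier for $\phi_1$, and by Das--Song for $P_{\mathfrak{X},1}$; the novelty is purely combinatorial assembly. The main obstacle I anticipate is bookkeeping: the coboundary $\mathrm{D}^1$ has six scalar slots and I must match signs carefully and make sure that the chain map $\Theta^1$, which enters via $-\Theta^1(f,g,h)$ in the second component of $\mathrm{D}^1$, produces precisely the $\Phi^1$-terms needed on the right-hand side. Provided these signs are tracked uniformly — in particular keeping $(f,g)=(-F_{A,1},-F_{B,1})$ rather than $(+F_{A,1},+F_{B,1})$ — no genuine new ingredient is required beyond the $C^\bullet_{\mathrm{NjM}}$-machinery already set up, and the proof reduces to a direct verification that $\eta$ is the cochain exhibiting cohomologousness of the two infinitesimals in $\mathrm{H}^2_{\mathrm{NjM}}(\phi,\phi)$.
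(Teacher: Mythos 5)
Your proposal is correct and follows essentially the same route as the paper: expand the defining equations of the formal isomorphism at order $t^1$ and exhibit the difference of the two infinitesimal $2$-cocycles as $\mathrm{D}^1$ of the $1$-cochain determined by $(F_{A,1},F_{B,1})$, the only (immaterial) divergence being your sign convention $(-F_{A,1},-F_{B,1})$ because you compute the primed-minus-unprimed difference whereas the paper computes the opposite one.
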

\begin{proof}
Let $(F _{A ,t},F _{B ,t})$ be a formal isomorphism from
$\phi_{t}:(\mu_{A ,t},P_{A ,t})\to (\mu_{B ,t},P_{B ,t})$ to $\phi'_{t}:(\mu'_{A ,t},P'_{A ,t})\to (\mu'_{B ,t},P'_{B ,t})$, we have
\begin{align*}
\mu_{\mathfrak{X},1}-\mu'_{\mathfrak{X},1}~~&=~~\mu_\mathfrak{X}\circ (\mathrm{id} \otimes F _{\mathfrak{X},1})-F _{\mathfrak{X},1}\circ \mu_{\mathfrak{X}}+\mu_{\mathfrak{X}}\circ (F _{\mathfrak{X},1} \otimes \mathrm{id}),\\
P_{\mathfrak{X},1}-P'_{\mathfrak{X},1}~~&=~~P_{\mathfrak{X}}\circ F _{\mathfrak{X},1}-F _{\mathfrak{X},1}\circ P_{\mathfrak{X}},\\
\phi_{1}-\phi'_{1}~~&=~~\phi \circ F _{A ,1}-F _{B ,1}\circ  \phi.
\end{align*}
Thus,
$\big((\mu_{A ,1},\mu_{B ,1},\phi_{1}),(P_{A ,1},P_{B ,1},0)\big)-\big((\mu'_{A ,1},\mu'_{B ,1},\phi'_{1}),(P'_{A ,1},
P'_{B ,1},0)\big) =\mathrm{D}^1(F _{A ,1},F _{B ,1}).$
\end{proof}

\begin{defn}
A Nijenhuis algebra morphisms $\phi:(A ,P_A )\to (B ,P_B )$ is called \textit{rigid} if every one-parameter
formal deformation is equivalent to $\phi:(A ,P_A )\to (B ,P_B )$.
\end{defn}

The next result shows that if the $2$nd-cohomology group is trivial, then, the Nijenhuis algebra moprhisms is rigid .
\begin{theorem}\label{thm: rigid}
Let $\phi:(A ,P_A )\to (B ,P_B )$  be a Nijenhuis algebra morphisms. If $\mathrm{H}_{\mathrm{NjM}}^2(\phi,\phi)=0$, then $\phi:(A ,P_A )\to (B ,P_B )$  is rigid.
\end{theorem}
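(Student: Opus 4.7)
The plan is to carry out the standard inductive construction of formal deformation theory: given any one-parameter formal deformation $\phi_t:(\mu_{A,t},P_{A,t})\to(\mu_{B,t},P_{B,t})$ of $\phi$, I would build a formal isomorphism $(F_{A,t},F_{B,t})$ transporting $\phi_t$ to the trivial deformation $\phi$, thereby establishing rigidity. The base case comes for free from the preceding proposition, which shows that
\[
\alpha_1:=\bigl((\mu_{A,1},\mu_{B,1},\phi_{1}),(P_{A,1},P_{B,1},0)\bigr)
\]
is a $2$-cocycle in $C^\bullet_{\mathrm{NjM}}(\phi,\phi)$. By the hypothesis $\mathrm{H}^2_{\mathrm{NjM}}(\phi,\phi)=0$ there is a $1$-cochain $\xi_1=(F_{A,1},F_{B,1},b_1)$ with $\alpha_1=\mathrm{D}^1(\xi_1)$; setting $F^{(1)}_{\mathfrak{X},t}:=\mathrm{id}_{\mathfrak{X}}-F_{\mathfrak{X},1}\,t$ for $\mathfrak{X}\in\{A,B\}$ and transporting $\mu_{\mathfrak{X},t}$, $P_{\mathfrak{X},t}$, $\phi_t$ through $(F^{(1)}_{A,t},F^{(1)}_{B,t})$ in the sense of the formal-isomorphism definition of Section~\ref{sec:4} produces an equivalent deformation whose order-$t$ piece vanishes---this is exactly the coordinate-wise translation of $\alpha_1=\mathrm{D}^1(\xi_1)$.

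The inductive step repeats the same idea at higher order. Suppose that after finitely many such transports $\phi_t$ has been replaced, up to equivalence, by a deformation with $\mu_{\mathfrak{X},i}=P_{\mathfrak{X},i}=\phi_i=0$ for every $1\le i\le n-1$. Extracting the coefficient of $t^n$ from the deformation identities \eqref{Eq: deform eq for  products in DA}--\eqref{Eq: deform eq for  morphism of operators} and using the inductive hypothesis to discard every ``cross'' term mixing two nonzero lower-degree coefficients, what remains is precisely the cocycle relation $\mathrm{D}^2\alpha_n=0$ for $\alpha_n:=\bigl((\mu_{A,n},\mu_{B,n},\phi_n),(P_{A,n},P_{B,n},0)\bigr)$. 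By the vanishing hypothesis, $\alpha_n=\mathrm{D}^1(\xi_n)$ for some $\xi_n=(F_{A,n},F_{B,n},b_n)$, and conjugating by $F^{(n)}_{\mathfrak{X},t}:=\mathrm{id}_{\mathfrak{X}}-F_{\mathfrak{X},n}\,t^n$ kills the order-$t^n$ piece without touching any of the already-zero lower orders.

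The main obstacle I anticipate is the inductive verification that $\alpha_n$ is a $2$-cocycle---a direct generalisation of the preceding proposition obtained by careful bookkeeping of the $t^n$ coefficients in the associativity, Nijenhuis and morphism equations, the inductive hypothesis being what collapses the quadratic convolution sums to the single linear terms that match $\mathrm{D}^2\alpha_n$. Once this is granted, each $F^{(n)}_{\mathfrak{X},t}$ differs from the identity only at $t$-adic order $\ge n$, so the infinite composition $F_{\mathfrak{X},t}:=\cdots\circ F^{(2)}_{\mathfrak{X},t}\circ F^{(1)}_{\mathfrak{X},t}$ converges in $\mathrm{End}(\mathfrak{X})[\![t]\!]$ and yields the desired formal isomorphism from $\phi_t$ to the trivial deformation of $\phi$, completing the proof of rigidity.
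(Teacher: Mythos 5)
Your proposal is correct and follows essentially the same route as the paper: show the leading nonzero coefficient $\bigl((\mu_{A,n},\mu_{B,n},\phi_n),(P_{A,n},P_{B,n},0)\bigr)$ is a $2$-cocycle, use $\mathrm{H}^2_{\mathrm{NjM}}(\phi,\phi)=0$ to write it as $\mathrm{D}^1$ of a $1$-cochain, conjugate by $\mathrm{id}+F_{\mathfrak{X},n}t^n$ to kill that order, and iterate. The paper carries out only the $n=1$ step explicitly and appeals to repetition, whereas you additionally flag the inductive verification of the cocycle condition and the $t$-adic convergence of the infinite composition, which are exactly the points the paper leaves implicit.
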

\begin{proof}
Let $\phi_{t}:(\mu_{A ,t},P_{A ,t})\to (\mu_{B ,t},P_{B ,t})$ be a one-parameter formal deformation of
$\phi: (A,P_A)\to(B,P_B)$, then 
$\big((\mu_{A ,1},\mu_{B ,1},\phi_{1}),
(P_{A ,1},P_{B ,1},0)\big)$ is a $2$-cocycle. By $\mathrm{H}_{\mathrm{NjM}}^2(\phi,\phi)=0$, there exists $\big((F '_1,G '_1,b),(a_1,b_1, 0)\big) \in
C_{\mathrm{NjM}} ^1(\phi,\phi)$ such that $\mathrm{D} \big((F '_1,G '_1,b),(a_1,b_1,0)\big)=\big((\mu_{A ,1},\mu_{B ,1},\phi_{1}),
(P_{A ,1},$\\$P_{B ,1},0)\big)$, that is,
\begin{eqnarray*}
\mu_{A,1}(x \otimes y)&=&xF'_1(y)-F'_1(xy)+F'_1(x)y, ~~   \forall x,y\in A\\
\mu_{B,1}(x \otimes y)&=&xG'_1(y)-G'_1(xy)+G'_1(x)y,~~   \forall x,y\in B\\
\phi_1(x)&=&\phi \circ F'_1(x)-G'_1 \circ \phi(x), ~~   \forall x \in A\\
P_{A,1}(x)&=&-\Phi^1 \circ F'_1(x)   ~~   \forall x\in A\\
P_{B,1}(y)&=&-\Phi^1 \circ G'_1(y),  ~~   \forall y\in B\\
P_B(b)&=&b_1-\phi(a_1).
\end{eqnarray*}
Set $F _{A ,t}=\mathrm{id}_A +F _{A ,1}t$, $F _{B ,t}=\mathrm{id}_B +F _{B ,1}t$, and define
$\phi'_{t}:(\mu'_{A ,t},P'_{A ,t})\to (\mu'_{B ,t},P'_{B ,t})$ by
\begin{align*}
\begin{split}
\mu'_{A ,t}~~&=~~F _{t}\circ\mu_{A ,t}\circ (F ^{-1}_{t} \otimes F ^{-1}_{t}),\\
P'_{A ,t}~~&=~~F _{t}\circ P_{A ,t}\circ F ^{-1}_{t},\\
\mu'_{B ,t}~~&=~~G _{t}\circ \mu_{B ,t}\circ (G ^{-1}_{t} \otimes G ^{-1}_{t}),\\
P'_{B ,t}~~&=~~G _{t}\circ P_{B ,t}\circ G ^{-1}_{t},\\
\phi'_t~~&=~~G _{t}\circ \phi_t \circ F ^{-1}_{t}.
\end{split}
\end{align*}
For $\mathfrak{X}=\{A ,B \}$, we have
\begin{align*}
\mu'_{\mathfrak{X},t}~~&=~~\mu_\mathfrak{X}+[F _{\mathfrak{X},1}\circ \mu_\mathfrak{X}+\mu_{A ,1}-\mu_A \circ (\mathrm{id}\otimes F _{\mathfrak{X},1}+F _{\mathfrak{X},1}\otimes \mathrm{id})]t+\mu'_{\mathfrak{X},2}t^2+\dots\\
&=~~\mu_\mathfrak{X}+\mu'_{\mathfrak{X},2}t^2+\dots, \\
P'_{\mathfrak{X},t}~~&=~~P_{\mathfrak{X}}+(F _{\mathfrak{X},1}\circ P_\mathfrak{X}+T_{\mathfrak{X},1}-P_\mathfrak{X}\circ F _{\mathfrak{X},1})t+P'_{\mathfrak{X},2}t^2+\dots\\
&=P_{\mathfrak{X}}+P'_{\mathfrak{X},2}t^2+\dots, \\
\phi'_t~~&=~~\phi+(F _{B ,1} \circ \phi+\phi_1-\phi \circ F _{A ,1})t+\phi'_{2}t^2+\dots\\
&=~~\phi+\phi'_{2}t^2+\dots.
\end{align*}
Furthermore, we may verify that $((\mu'_{A ,2},\mu'_{B ,2},\phi'_{2}),(P'_{A ,2},P'_{B ,2},0))$ is also a $2$-cocyle. Then, by repeating the argument,  it is equivalent to a trivial deformation. Thus, $\phi:(A,P_A) \to (B,P_B)$ is rigid.
\end{proof}

\section{$\mathrm{CCT}$ Theorem of Nijenhuis algebra morphisms}\label{sec:5}
Gerstenhaber and Schack \cite{Gerst1,Gerst2} give cohomology comparison Theorem ($\mathrm{CCT}$) to study a deformation theory of algebra
morphisms. In this section, we define the cohomology comparison Theorem ($\mathrm{CCT}$) of Nijenhuis algebras morphisms,
i.e., the cohomology of a Nijenhuis  algebra morphisms is isomorphic to the cohomology of
an auxiliary Nijenhuis algebra.
\begin{defn}\label{defi: morphism algebra}
Let $A ,B $ be two associative algebras, $\phi: A \rightarrow B $
be a morphism of associative algebras,
the mapping ring $\phi!$ is defined as $\phi!=A \oplus B \oplus B \phi$, the
multiplication is determined by associativity, the products in $A ,B $ and the conditions:
$x\cdot y=y\cdot x=\phi \cdot y=x\cdot \phi=\phi^2=0$ and $\phi \cdot x=\phi(x)\phi$, the unit of
$\phi!$ is $1_A +1_B $. In other words, the elements of $\phi!$ are of the form
$x+y_1+y_2\phi$ with $x\in A ,y_1,y_2\in B $ and
$(x+y_1+y_2\phi)(x'+y'_1+y'_2\phi)=xx'+y_1y'_1+(y_2\phi(x')+y_1y'_2)\phi$.
\end{defn}
\begin{defn}
Let $\psi:M\rightarrow N$ be a bimodule of $\phi: A \rightarrow B $, the
$\phi!$-module $\psi!$ is defined by
$\psi!=M\oplus N\oplus N \phi$, for $m+n_1+n_2\phi \in\psi!$,
$x+y_1+y_2\phi\in\phi!$, the
bimodule structure of $\phi!$ on
$\psi!$ is given by
\begin{align*}
(x+y_1+y_2\phi)(m+n_1+n_2\phi)&=xm+y_1n_1+(y_2\psi(m)+y_1n_2)\phi,\\
(m+n_1+n_2\phi)(x+y_1+y_2\phi)&=mx+n_1y_1+(n_2\phi(x)+n_1y_2)\phi.
\end{align*}
\end{defn}
As a consequence of the above definitions, we have the following result. 
\begin{lemma}\label{lem: phi RB algebra}
Let $\phi:(A,P_A )\rightarrow (B,P_B)$ be a morphism of Nijenhuis algebras. Define a linear map
$P_{\phi!}:\phi!\rightarrow \phi!$ by
$P_{\phi!}(x+y_1+y_2\phi)=P_A (x)+P_B (y_1)+P_B (y_2)\phi$,
where $\phi!=A \oplus B \oplus B \phi$ is an associative algebra by Definition
{\rm \ref{defi: morphism algebra}}.
Then $(\phi!,P_{\phi!})$ is a Nijenhuis algebra.
\end{lemma}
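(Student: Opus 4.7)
The plan is to verify the Nijenhuis identity
\[
P_{\phi!}(u)\cdot P_{\phi!}(v) = P_{\phi!}\bigl(P_{\phi!}(u)\cdot v + u\cdot P_{\phi!}(v) - P_{\phi!}(u\cdot v)\bigr)
\]
for all $u,v\in\phi!$. Both sides are bilinear in $(u,v)$, and by construction $P_{\phi!}$ preserves each summand of the decomposition $\phi! = A\oplus B\oplus B\phi$. It therefore suffices to check the identity on the nine pairs of elements drawn from these three summands.

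Two cases are immediate: when $u,v\in A$ or $u,v\in B$, every ingredient stays inside the subalgebra in question and $P_{\phi!}$ restricts to $P_A$ or $P_B$, so the identity is exactly the Nijenhuis relation of $(A,P_A)$ or $(B,P_B)$. Five of the remaining cases collapse trivially: for the pairs $(A,B)$, $(B,A)$, $(A,B\phi)$, $(B\phi,B)$ and $(B\phi,B\phi)$, the annihilation rules $x\cdot y = y\cdot x = x\cdot \phi = \phi\cdot y = \phi^2 = 0$ force every product appearing on either side of the identity to vanish.

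The substantive content lies in the two cases left. For $(u,v)\in B\times B\phi$, writing $u=b$ and $v=c\phi$, every product occurring in the identity lies in $B\phi$ with $B$-coefficient obtained by ordinary multiplication of $b$ with $c$ in $B$, so the identity reduces to the Nijenhuis relation in $B$ applied to $(b,c)$. The genuinely new case is $(u,v)\in B\phi\times A$. Setting $u = c\phi$ and $v = a$, one computes $u\cdot v = c\,\phi(a)\,\phi$, and the left-hand side equals
\[
P_B(c)\,\phi(P_A(a))\,\phi = P_B(c)\,P_B(\phi(a))\,\phi,
\]
where the second equality uses the morphism condition $\phi\circ P_A = P_B\circ\phi$. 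The right-hand side unfolds to
\[
P_B\bigl(P_B(c)\,\phi(a) + c\,P_B(\phi(a)) - P_B(c\,\phi(a))\bigr)\,\phi,
\]
and the two expressions agree by the Nijenhuis relation in $B$ applied to the pair $(c,\phi(a))$.

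The only real obstacle, such as it is, is recognising that the compatibility $\phi\circ P_A = P_B\circ\phi$ is precisely what is required in the $(B\phi,A)$ case to replace $\phi\circ P_A$ by $P_B\circ\phi$ and thereby reduce the computation to the $B$-Nijenhuis identity; everything else is bookkeeping across the case distinction.
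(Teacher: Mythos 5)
Your proof is correct and follows essentially the same route as the paper: the paper expands the Nijenhuis identity on general elements $x+y_1+y_2\phi$ and $x'+y_1'+y_2'\phi$ at once, which amounts to exactly your nine-case bilinear decomposition, and the one nontrivial step in both arguments is the substitution $\phi(P_A(x'))=P_B(\phi(x'))$ in the $B\phi\times A$ cross term so that the $\phi$-component reduces to the Nijenhuis relation in $B$.
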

\begin{proof}
For $ x + y_1 + y_2\phi, x' + y_1' + y_2'\phi \in \phi! $, we have
\begin{align*}
&\quad P_{\phi!}(x + y_1 + y_2\phi)P_{\phi!}(x' + y_1' + y_2'\phi) \\
&= (P_A(x) + P_B(y_1) + P_B(y_2)\phi)(P_A(x') + P_B(y_1') + P_B(y_2')\phi) \\
& =P_A(x)P_A(x') + P_B(y_1)P_B(y_1') + (
P_B(y_2)\phi(P_A(x'))+
P_B(y_1)P_B(y_2'))\phi \\
&= P_A(x)P_A(x') + P_B(y_1)P_B(y_1') + (P_B(y_1)P_B(y_2') + P_B(y_2)P_B(\phi(x')))\phi \\
&= P_A\big(P_A(x)x' + xP_A(x') - P_A(xx')\big) + P_B\big(P_B(y_1)y_1' + y_1P_B(y_1') - P_B(y_1y_1')\big) \\
&\quad + \Big(P_B\big(P_B(y_1)y_2' + y_1P_B(y_2') - P_B(y_1y_2')\big) \\
&\quad + P_B\big(P_B(y_2)\phi(x') + y_2P_B(\phi(x')) - P_B(y_2\phi(x'))\big)\Big)\phi \\
&= P_{\phi!}\Big(P_A(x)x' + xP_A(x') - P_A(xx') + P_B(y_1)y_1' + y_1P_B(y_1') - P_B(y_1y_1') \\
&\quad + \big(P_B(y_1)y_2' + y_1P_B(y_2') - P_B(y_1y_2')\big)\phi \\
&\quad + \big(P_B(y_2)\phi(x') + y_2P_B(\phi(x')) - P_B(y_2\phi(x')) \big)\phi\Big) \\
&= P_{\phi!}\Big((P_A(x) + P_B(y_1) + P_B(y_2)\phi)(x' + y_1' + y_2'\phi) \\
&\quad + (x + y_1 + y_2\phi)(P_A(x') + P_B(y_1') + P_B(y_2')\phi) \\
&\quad - P_{\phi!}\big((x + y_1 + y_2\phi)(x' + y_1' + y_2'\phi)\big)\Big) \\
&= P_{\phi!}\Big(P_{\phi!}(x + y_1 + y_2\phi)(x' + y_1' + y_2'\phi) \\
&\quad + (x + y_1 + y_2\phi)P_{\phi!}(x' + y_1' + y_2'\phi) \\
&\quad - P_{\phi!}\big((x + y_1 + y_2\phi)(x' + y_1' + y_2'\phi)\big)\Big).
\end{align*}
Then, \( P_{\phi!} \) is a Nijenhuis operator. It follow that $(\phi!, P_{\phi!})$ is a Nijenhuis algebra.
\end{proof}
Let $\phi:(A,P_A )\rightarrow (B,P_B)$ be a morphism of Nijenhuis algebras,
$\langle (M,P_M ),(N,P_N ) ,\psi\rangle$ be a Nijenhuis $\phi$-bimodule. We define
$P_{\psi!}:\psi!\rightarrow \psi! $ by
$P_{\psi!}(m+n_1+n_2\phi)=P_M(m)+P_N(n_1)+P_N(n_2)\phi$,
in the same way, one may check that $(\psi!,P_{\psi!})$ is a Nijenhuis bimodule over
$(\phi!,P_{\phi!})$. 
By Remark \ref{remark morphism module},
 $\langle{}_{\rhd}M_\lhd,{}_\rhd N_\lhd,_\rhd\psi_\lhd\rangle$ 
 is a Nijenhuis
$\phi$-bimodule.
So  $\langle{}_{\rhd}\psi_\lhd!,P_{\psi!}\rangle$ is a bimodule over Nijenhuis algebra
$(\phi!,P_{\phi!})$.
\begin{defn}
Let $\phi:A \rightarrow B $ be an associative algebra morphism,
$\langle M,N ,\psi\rangle$ be a $\phi$-bimodule, define
$\tau^\bullet_\phi:C_{\mathrm{mor}}^\bullet(\phi,\psi)\rightarrow C_{\mathrm{Alg}}^\bullet(\phi!,\psi!)$ as follows:
for $\Gamma=(\Gamma^A ,\Gamma^B ,\Gamma^{A B })\in C_{\mathrm{mor}}^n(\phi,\psi)$,
$\tau^n_\phi \Gamma$ defined by
\begin{align*}
&\tau^n_\phi \Gamma|_{A ^{\otimes n}}=\Gamma^A ;
\tau^n_\phi \Gamma|_{B ^{\otimes n}}=\Gamma^B\\
&\text{for}~ ~(y\phi,x_2,\dots,x_n)\in B \phi\otimes A ^{n-1}\\
&\tau^n_\phi \Gamma(y\phi,x_2,\dots,x_n)=\Gamma^B (y,\phi(x_2),\dots,\phi(x_n))\phi +y \Gamma^{A B }(x_2,\dots,x_n)\phi \\
&\text{for}~ ~(y_1,\dots,y_{r-1},y_r\phi,x_{r+1},\dots,x_n)\in B ^{r-1}\otimes B \phi\otimes A ^{n-r}\\
&\tau^n_\phi \Gamma(y_1,\dots,y_{r-1},y_r\phi,x_{r+1},\dots,x_n)=
\Gamma^B (y_1,\dots,y_{r-1},y_r,\phi(x_{r+1}),\dots,\phi(x_n))\phi   \\
&\tau^n_\phi \Gamma(x_1,\dots,x_n)=0.~~~~~~~~Otherwise
\end{align*}
Then, $\tau^\bullet_\phi:C_{\mathrm{mor}}^\bullet(\phi,\psi)\rightarrow C_{\mathrm{Alg}} ^\bullet(\phi!,\psi!)$ is a
 quasi-isomorphism.
\end{defn}
Let $\phi: (A,P_A) \to (B,P_B) $ be a Nijenhuis algebra morphism and
$\langle (M,P_M ),(N,P_N ) ,\psi\rangle$ be 
a Nijenhuis $\phi$-bimodule, then
 $\langle{}_{\rhd}M_\lhd,{}_\rhd N_\lhd,_\rhd\psi_\lhd\rangle$ 
 is a Nijenhuis
$\phi$-bimodule.
Let $C^\bullet_{\mathrm{Alg}}(\phi!,{} _\rhd\psi_\lhd!)$ = $C^\bullet_{\mathrm{NjO}}(\phi!,\psi!)$.
We will show the following Lemma:
\begin{lemma}\label{lem: commu diagram}
The diagramm

\[
\xymatrix{
C_{\mathrm{mor}}^\bullet(\phi,\psi) \ar[d]^{\Theta^\bullet} \ar[r]^{\tau^\bullet_\phi} &C_{\mathrm{Alg}} ^\bullet
(\phi!,\psi!)
\ar[d]^{\Phi^\bullet}\\
C_{\mathrm{mor}}^\bullet(\phi,{}_\rhd\psi_\lhd) \ar[r]^{\tau^\bullet_{{}_\rhd\phi_\lhd}} &C^\bullet_{\mathrm{NjO}}
(\phi!,\psi!)}
\]
\medskip
is commutative, i.e., $\Phi^\bullet\circ\tau^\bullet_\phi
=\tau^\bullet_{{}_\rhd\phi_\lhd}\circ \Theta^\bullet.$
\end{lemma}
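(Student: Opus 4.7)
\smallskip

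My plan is to verify the identity $\Phi^n\circ\tau^n_\phi=\tau^n_{{}_\rhd\phi_\lhd}\circ\Theta^n$ directly at the level of cochains. Both sides live in $C^n_{\mathrm{NjO}}(\phi!,\psi!)=\mathrm{Hom}((\phi!)^{\otimes n},\psi!)$, so I would fix an arbitrary triple $\Gamma=(\Gamma^A,\Gamma^B,\Gamma^{AB})\in C_{\mathrm{mor}}^n(\phi,\psi)$ and compare the evaluations of $\Phi^n(\tau^n_\phi\Gamma)$ and $\tau^n_{{}_\rhd\phi_\lhd}(\Theta^n\Gamma)$ on every pure tensor in $(\phi!)^{\otimes n}$, using the decomposition $\phi!=A\oplus B\oplus B\phi$ to split into finitely many cases.

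The key preliminary observation is that $P_{\phi!}$ and $P_{\psi!}$ respect this tri-grading: $P_{\phi!}|_A=P_A$, $P_{\phi!}|_B=P_B$, $P_{\phi!}(y\phi)=P_B(y)\phi$, and analogously for $P_{\psi!}$. Consequently, every substitution of the form $x_i\mapsto P_{\phi!}(x_i)$ appearing inside the defining sum of $\Phi^n$ preserves the type of the input tuple. The inputs therefore split into four mutually exclusive families: (i) all factors in $A$, (ii) all factors in $B$, (iii) of the form $(y_1,\dots,y_{r-1},y_r\phi,x_{r+1},\dots,x_n)$, and (iv) anything else (for which $\tau$ is zero by definition).

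Cases (i), (ii), (iv) are mechanical: in (i) and (ii) both compositions collapse to $\Phi^n_{A,M}(\Gamma^A)$ and $\Phi^n_{B,N}(\Gamma^B)$ respectively, since the triples produced by $\tau$ only see the pure $A$ (resp.\ $B$) component, and both sides unfold to the same subset-sum by the definition of $\Phi^n$. In case (iv), both sides vanish because the types are preserved by $P_{\phi!}$ and the ``Otherwise'' clause of $\tau$ applies term-by-term. The real content is case (iii). Its $\Gamma^B$-part simplifies using the identity $\phi\circ P_A=P_B\circ\phi$: substituting $\phi(P_A(x_j))=P_B(\phi(x_j))$ inside the sum converts the subset-sum for $\Phi^n_{\phi!}(\tau^n_\phi\Gamma)$ into precisely $\Phi^n_{B,N}(\Gamma^B)(y_1,\dots,y_{r-1},y_r,\phi(x_{r+1}),\dots,\phi(x_n))\phi$, which agrees with what $\tau^n_{{}_\rhd\phi_\lhd}$ produces from the second component of $\Theta^n\Gamma$.

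The remaining and delicate subcase is $r=1$, where the extra term $y\Gamma^{AB}(x_2,\dots,x_n)\phi$ in the definition of $\tau^n_\phi$ contributes. I would split the subset-sum $\sum_I(-1)^{n-|I|}P^{n-|I|}_{\psi!}(\cdots)$ according to whether $1\in I$ or $1\notin I$, write $I'=I\cap\{2,\dots,n\}$, and collect terms to obtain a sum of the shape
\[
\sum_{I'}(-1)^{n-|I'|}\Bigl[P_N^{n-|I'|}\bigl(y_1\,u(I')\bigr)-P_N^{n-|I'|-1}\bigl(P_B(y_1)\,u(I')\bigr)\Bigr]\phi,
\]
where $u(I')=\Gamma^{AB}(x'_2,\dots,x'_n)$ depends on which $x_j$ have been replaced by $P_A(x_j)$. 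Using the defining relation $y_1\rhd n=P_B(y_1)n+y_1P_N(n)-P_N(y_1 n)$ of the deformed ${}_\rhd N_\lhd$-action, together with the Nijenhuis bimodule identity $P_B(y_1)P_N(n)=P_N(y_1\rhd n)$, I would repackage this expression as the image of $\Phi^{n-1}_{A,N}(\Gamma^{AB})(x_2,\dots,x_n)$ under the $B$-action coming from the ${}_\rhd\psi!_\lhd$-bimodule structure, which is precisely the $\Gamma^{AB}$-contribution on the right-hand side.

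The main obstacle is exactly the combinatorial bookkeeping in this last subcase: $P_N$ does not commute with the $B$-action, so the two halves of the split sum do not simplify individually, and one has to recognise that their sum is a $\rhd$-action applied after $\Phi^{n-1}_{A,N}$. Once this identification is carried out, the remaining cases are direct, and the identity $\Phi^\bullet\circ\tau^\bullet_\phi=\tau^\bullet_{{}_\rhd\phi_\lhd}\circ\Theta^\bullet$ follows.
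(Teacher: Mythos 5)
Your overall strategy --- evaluating both composites on pure tensors and splitting according to the decomposition $\phi!=A\oplus B\oplus B\phi$, which is preserved by $P_{\phi!}$ and $P_{\psi!}$ --- is the natural one, and your treatment of the all-$A$, all-$B$, vanishing, and $r\geqslant 2$ cases is correct. (For comparison purposes: the paper states this lemma with no proof at all, so your attempt cannot be measured against an argument in the text.) The difficulty is concentrated exactly in the subcase you flag as delicate, and there your ``repackaging'' of the $\Gamma^{AB}$-contribution is asserted rather than verified --- and it is false. Take $n=1$, so $\Gamma^{AB}=n_0\in N$ and $\tau^1_\phi\Gamma(y\phi)=\Gamma^B(y)\phi+yn_0\phi$. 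Then
\begin{align*}
\Phi^1(\tau^1_\phi\Gamma)(y\phi)&=\Gamma^B(P_B(y))\phi+P_B(y)n_0\phi-P_N(\Gamma^B(y))\phi-P_N(yn_0)\phi,\\
\tau^1_{{}_\rhd\phi_\lhd}(\Theta^1\Gamma)(y\phi)&=\bigl(\Gamma^B(P_B(y))-P_N(\Gamma^B(y))\bigr)\phi+\bigl(P_B(y)n_0+yP_N(n_0)-P_N(yn_0)\bigr)\phi,
\end{align*}
and the two sides differ by $-\,yP_N(n_0)\phi$. The Nijenhuis bimodule identity $P_B(y)P_N(n)=P_N(y\rhd n)$ relates $P_B(y)P_N(n)$ to $P_N$ of the deformed action, but it gives you no way to absorb the term $yP_N(n_0)$, so the identity you need in the $r=1$ subcase does not hold. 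Running your own splitting of the subset sum in general degree (and using the recursion $P_N^{k}(P_B(y)u)=P_N^{k-1}(P_B(y)P_N(u))-P_N^{k}(yP_N(u))+P_N^{k+1}(yu)$ that follows from the bimodule axiom) shows that the discrepancy between the two composites on $(y\phi,x_2,\dots,x_n)$ is exactly $-\,y\,P_N\bigl(\Phi^{n-1}_{A,N}(\Gamma^{AB})(x_2,\dots,x_n)\bigr)\phi$, which does not vanish in general.

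So the gap is not a matter of combinatorial bookkeeping that a more careful computation would close: with the definitions of $\tau^\bullet_\phi$, $\Theta^\bullet$ and $\Phi^\bullet$ as given, the square fails to commute already in degree one. Either the lemma requires a corrected definition of $\tau^\bullet_{{}_\rhd\phi_\lhd}$ (for instance, a different interpretation of the term $y\,\Gamma^{AB}(\cdots)\phi$ in the deformed setting, or a modified $\phi!$-module structure on ${}_\rhd\psi_\lhd!$), or strict commutativity must be weakened to commutativity up to a chain homotopy accounting for the residual term above. Before attempting the general-degree bookkeeping, you should resolve the $n=1$ discrepancy; as it stands, the final step of your proposal cannot be carried out.
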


Now, we define 
$\tau^\bullet:C_{\mathrm{NjM}} ^\bullet(\phi,\psi)\rightarrow C^\bullet_{\mathrm{NjA}}
(\phi!,\psi!)$
to be 
$\tau^\bullet=\begin{pmatrix} \tau^{\bullet}_{\phi}&0\\0&\tau^{\bullet}_{{}_{\rhd}\phi_\lhd}
\end{pmatrix}$.
From Lemma \ref{lem: commu diagram}, $\tau^\bullet$ is a cochain map.
Thus, we get the following commutative diagram
\[\xymatrix{
		0\ar[r]& C_{\mathrm{mor}}^{\bullet-1}(\phi,_\rhd\psi_\lhd)\ar[r]\ar[d]^-{\tau^{\bullet-1}_{{}_\rhd\phi_\lhd}}&C_{\mathrm{NjM}}^\bullet(\phi,\psi)
\ar[r]\ar[d]^-{\tau^{\bullet}}&C_{\mathrm{mor}}^\bullet(\phi,\psi)\ar[r]\ar[d]^-{\tau^{\bullet}_{\phi}}&0\\
		0\ar[r]&C_{\mathrm{NjO}}^{\bullet-1}(\phi!,\psi!)\ar[r]& C^\bullet_{\mathrm{NjA}}(\phi!,\psi!)\ar[r]&  C_{\mathrm{Alg}}^\bullet(\phi!,\psi!)\ar[r]&0. }\]
Since $\mathrm{H}^\bullet(\tau^{\bullet}_{{}_\rhd\phi_\lhd})$ and $\mathrm{H}^\bullet(\tau^\bullet_\phi)$ are  isomorphisms, $\mathrm{H}^\bullet(\tau^\bullet)$ is also an isomorphism. It follows that the main Theorem. 
\begin{theorem}\label{thm: CCT of RB}
Suppose that $\phi:(A,P_A) \to (B,P_B)$ is a Nijenhuis 
algebras morphisms and $\langle (M,P_M),(N,P_N) ,\psi\rangle$ is a Nijenhuis $\phi$-bimodule. Let
$\mathrm{H}_{\mathrm{NjM}}^n(\phi,\psi)$ be the cohomology group of $\phi$ with coefficients in
$\langle (M,P_M),(N,P_N) ,\psi\rangle$ and
$\mathrm{H}_{\mathrm{NjA}}^n(\phi!,\psi!)$ be the cohomology group of Nijenhuis 
algebra $(\phi!,P_{\phi!})$ with coefficients in $(\psi!,P_{\psi!})$. Then
$\mathrm{H}_{\mathrm{NjM}}^n(\phi,\psi)\cong \mathrm{H}_{\mathrm{NjA}}^n(\phi!,\psi!)$.
\end{theorem}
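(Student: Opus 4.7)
The plan is to upgrade the classical Gerstenhaber--Schack comparison isomorphism to the Nijenhuis setting by exhibiting both $C^\bullet_{\mathrm{NjM}}(\phi,\psi)$ and $C^\bullet_{\mathrm{NjA}}(\phi!,\psi!)$ as (shifted) mapping cones of chain maps whose horizontal components are already known to be quasi-isomorphisms, and then to conclude via the five-lemma. First, I would take as given the Gerstenhaber--Schack quasi-isomorphism $\tau^\bullet_\phi \colon C_{\mathrm{mor}}^\bullet(\phi,\psi) \to C_{\mathrm{Alg}}^\bullet(\phi!,\psi!)$ recalled just above, together with its analogue $\tau^\bullet_{{}_\rhd\phi_\lhd}\colon C_{\mathrm{mor}}^\bullet(\phi,{}_\rhd\psi_\lhd)\to C_{\mathrm{NjO}}^\bullet(\phi!,\psi!)$, obtained by replacing $\psi$ with its deformed avatar ${}_\rhd\psi_\lhd$ and using Lemma \ref{lem: phi RB algebra} to view $(\phi!,P_{\phi!})$ as a genuine Nijenhuis algebra. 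These two quasi-isomorphisms are the building blocks of the argument.

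Next, I observe that by Definition \ref{defi: cohomology of diff morphism} the complex $C_{\mathrm{NjM}}^\bullet(\phi,\psi)$ is, up to the standard shift, the mapping cone of $\Theta^\bullet$, while by Definition \ref{Def: definition of Nijenhuis cohomology} the complex $C_{\mathrm{NjA}}^\bullet(\phi!,\psi!)$ is the mapping cone of $\Phi^\bullet$ applied to the pair $(\phi!,\psi!)$. Each cone fits into a short exact sequence of cochain complexes, and Lemma \ref{lem: commu diagram} supplies the compatibility $\Phi^\bullet\circ\tau^\bullet_\phi=\tau^\bullet_{{}_\rhd\phi_\lhd}\circ\Theta^\bullet$ needed to assemble $\tau^\bullet_\phi$ and $\tau^\bullet_{{}_\rhd\phi_\lhd}$ diagonally into the cochain map $\tau^\bullet$. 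By construction $\tau^\bullet$ fits into a morphism between the two short exact sequences displayed just before the theorem statement.

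Finally, I pass to the induced commutative ladder of long exact sequences in cohomology. Two out of every three consecutive vertical arrows, namely $\mathrm{H}^\bullet(\tau^\bullet_\phi)$ and $\mathrm{H}^\bullet(\tau^\bullet_{{}_\rhd\phi_\lhd})$, are already isomorphisms, so the five-lemma forces the remaining arrow $\mathrm{H}^\bullet(\tau^\bullet)$ to be an isomorphism as well, yielding the claimed identification $\mathrm{H}_{\mathrm{NjM}}^n(\phi,\psi)\cong \mathrm{H}_{\mathrm{NjA}}^n(\phi!,\psi!)$. The main obstacle is really buried in Lemma \ref{lem: commu diagram}: verifying the identity $\Phi^\bullet\circ\tau^\bullet_\phi=\tau^\bullet_{{}_\rhd\phi_\lhd}\circ\Theta^\bullet$ requires a careful case-by-case computation on the different cell types of $\phi!^{\otimes n}$, corresponding to where the $B\phi$-slots occur, and a verification that the extended Nijenhuis operator $P_{\phi!}$ from Lemma \ref{lem: phi RB algebra} intertwines the alternating sums in Eq.~\eqref{cochai} in the way prescribed by the deformed bimodule structure from Theorem \ref{thm:triang action Module-morphism}.
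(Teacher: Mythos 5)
Your proposal follows essentially the same route as the paper: both assemble $\tau^\bullet$ diagonally from the Gerstenhaber--Schack quasi-isomorphisms $\tau^\bullet_\phi$ and $\tau^\bullet_{{}_\rhd\phi_\lhd}$ using the commutativity from Lemma \ref{lem: commu diagram}, obtain a morphism of short exact sequences of the mapping-cone complexes, and conclude by the five-lemma on the induced long exact sequences. You also correctly identify that the real content is concentrated in Lemma \ref{lem: commu diagram}, whose case-by-case verification the paper in fact leaves unproved.
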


\section{Minimal model of operad for Nijenhuis algebras morphisms}\label{sec:6}
Since operad of Nijenhuis algebras is Koszul, 
so we can construct minimal models of Nijnhuis algebra morphisms operads by \cite{Do}, 
i.e., we will construct $2$-colored operads over Nijenhuis algebras and their minimal models.
\begin{defn}(\cite{Song}) 
The \textit{operads} for Nijenhuis algebras is defined to be the quotient of
the \textit{free graded operad} $F(M)$ generated by a \textit{graded collection} $M$ by an \textit{operadic ideal} $I$, where
the collection $M$ is given by $M(1) = \bk P, M(2) = \bk \mu$ and $M(n) = 0$ for $n\neq 1, 2$ and $I$ is
generated by
\begin{align*}
&\mu\circ_1 \mu-\mu\circ_2\mu, \\
&(\mu \circ_1 P)\circ_2 P -(P \circ_1 \mu)\circ_1 P-(P \circ_1 \mu)\circ_2 P + (P \circ_{1} P) \circ_{1} \mu.
\end{align*}
Denote this operad by $\mathfrak{RjU}$.
\end{defn}
Then minimal model of  $\mathfrak{RjU}$ may be defined in \cite{Song} as follows.
\begin{defn}\label{def-minimodel1}(\cite{Song}) 
Let $\mathcal{O} = (\mathcal{O}(1),\dots, \mathcal{O}(n), \dots)$ be the \textit{graded collection} where $\mathcal{O}(1) = \bk P_1$ with
$|P_1|=0$ and for $n \geqslant 2$, $\mathcal{O}(n) = \bk P_n \oplus \bk m_n$ with $|P_n| = n-1, |m_n|= n-2$. The \textit{operad $\mathfrak{RjU}_{\infty}$
of homotopy Nijenhuis algebras} is defined by the \textit{differential graded operad} $(\mathcal{F(O)}, \partial)$, where
the underlying \textit{free graded operad} is generated by the graded collection $\mathcal{O}$ and the \textit{action of the
diﬀerential} $\partial$ on generators is given by the following equations. For each $n\geqslant 2$,
\begin{equation*} \label{Eq. homotopy NjA eq1}
	\begin{aligned}
		\partial(m_n)
		 =&\ \sum_{j=2}^{n-1}\sum_{i=1}^{n-j+1}(-1)^{i+j(n-i)}m_{n-j+1}\circ_i m_j,
	\end{aligned}
\end{equation*}
and for $ n \geqslant 1 $,
\begin{equation*} \label{Eq. homotopy NjA eq2}
	\begin{aligned}
			 \partial(P_{n})
			= &\ \sum_{ \substack{ r_1+\dots+r_p =n
					\\	r_1, \dots, r_p \geqslant 1
					\\	2 \leqslant p \leqslant n} }
			\sum_{ t=0}^{p}
			\sum_{\substack{ 1 \leqslant i_{1} \leqslant r_{1} \\ \dots \\ 1 \leqslant i_{t} \leqslant r_{t} }}
			\sum_{1 \leqslant k_{t} < \dots < k_{p-1} \leqslant p}
			(-1)^{\alpha'} \\
			& P_{r_{1}} \circ_{i_{1}}
			\Big(
			P_{r_{2}} \circ_{i_{2}}
			\big(
			\dots
			\circ_{i_{t-1}}
			\Big(
			P_{r_{t}} \circ_{i_{t}}
			\big(
			(
			\dots
			((m_{p} \circ_{k_{t}} P_{r_{t+1}}) \circ_{k_{t+1}+r_{t+1}-1} P_{r_{t+2}} )
			\dots
			) \circ_{\beta} P_{r_{p}}
			\big)
			\Big)
			\dots
			\big)
			\Big),
	\end{aligned}
\end{equation*}
where	
\begin{align*}
	\beta &= k_{p-1}+r_{t+1} + \dots + r_{p-1}-(p-1-t),\\
	\alpha'
	&= 1+ \sum^{t}_{q=1} \Big(i_{q} + \big(\sum_{s=q+1}^{p} r_{s}\big) (r_{q}-i_{q}) - q(r_{q}-i_{q})\Big)
	+ \sum_{i=t+1}^{p}(k_{i-1} -p)(r_{i}-1),
\end{align*}
and  $\mathfrak{RjU}_{\infty}$ is the \textit{minimal model of the operad $\mathfrak{RjU}$ for Nijenhuis algebras}.
\end{defn}
Similar to \cite[Example 4]{Mark4}, we have the following definition.
\begin{defn}
Let $\mathfrak{Rj}\mathfrak{U}$ be the operad for Nijenhuis algebras. Then there is a \textit{$2$-colored operad} $\mathfrak{Rj}\mathfrak{U}^{\mathbb{A}\rightarrow \mathbb{B}}$  whose algebras are of the
form $f : A \rightarrow B$ , in which $A$ and $B$ are \textit{$\mathfrak{Rj}\mathfrak{U}$-algebras} and $f$ is a morphism of $\mathfrak{Rj}\mathfrak{U}$-algebras, which is constructed as the following
\begin{align*}
\mathfrak{RjU}^{\mathbb{A}\rightarrow \mathbb{B}}=\frac{\mathfrak{RjU}^A*\mathfrak{RjU}^B*F(f)}{(fx_A=x_Bf^\otimes, x\in \mathfrak{RjU}(n))},
\end{align*}
where $\mathfrak{Rj}\mathfrak{U}^A$ and $\mathfrak{Rj}\mathfrak{U}^B$ are copies of $\mathfrak{Rj}\mathfrak{U}$  concentrated in the \textit{colors} $\mathbb{A}$ and $\mathbb{B}$, respectively,
$x_A$ and $x_B$ are the respective copies of $x$ in $\mathfrak{Rj}\mathfrak{U}^A$ and $\mathfrak{Rj}\mathfrak{U}^B$, and $F(f)$ is the \textit{
free $2$-colored operad} on the generator $f : A\rightarrow B$. The star $*$ denotes the \textit{free product (= the coproduct) of $2$-colored operad.} 

Precisely, $\mathfrak{Rj}\mathfrak{U}^{\mathbb{A}\rightarrow \mathbb{B}}$ is defined to be the quotient of
the \textit{free graded operad} $F(N)$ generated by a \textit{graded collection} $N$ by an operadic ideal $I$, where
the \textit{collection} $N$ is given by $N(1) = \bk\{P_A,P_B,f\}, N(2) = \bk\{\mu_A,\mu_B\}$ and $N(n) = 0$ for $n\neq 1, 2$ and $I$ is
generated by
\begin{align*}
&\mu_{X}\circ_1 \mu_{X}-\mu_{X}\circ_2\mu_{X}, \\
&(\mu_X \circ_1 P_X)\circ_2 P_X -(P_X \circ_1 \mu_X)\circ_1 P_X-(P_X \circ_1 \mu_X)\circ_2 P_X + (P_X \circ_{1} P_X) \circ_{1} \mu_X,\\
&f\circ_1\mu_A-(\mu_B\circ_1 f)\circ_2f,\\
&f\circ_1 P_A-P_B\circ_1 f,
\end{align*}
where $X=\{A,B\}$.
\end{defn}

Since the operad $\mathfrak{Rj}\mathfrak{U}$ is Koszul, so we can construct the minimal model of $\mathfrak{Rj}\mathfrak{U}^{\mathbb{A}\rightarrow \mathbb{B}}$ by Dotsenko-Poncin \cite{Do}. Recall homotopy cooperad $\mathfrak{Rj}\mathfrak{U}^{\text{!`}}$ in \cite{Song} as follows.

The \textit{homotopy cooperad} $\mathscr{S}(\mathfrak{Rj}\mathfrak{U}^\text{!`})\ot \mathcal{S}^{-1}$, the \textit{Hadamard product} of $\mathscr{S}(\mathfrak{Rj}\mathfrak{U}^\text{!`})$ and $\mathcal{S}^{-1}$, is called the \textit{Koszul dual homotopy cooperad} of $\mathfrak{Rj}\mathfrak{U}$, denoted by $\mathfrak{Rj}\mathfrak{U}^{\text{!`}}$
.

Precisely, the underlying \textit{graded collection} of ${\mathfrak{Rj}\mathfrak{U}^\text{!`}}$ is
$${\mathfrak{Rj}\mathfrak{U}^\text{!`}}(n)=\bk e_n\oplus \bk o_n$$
with $e_n=u_n\ot \varepsilon_n$ and $o_n=v_n\ot \varepsilon_n$ for $n\geqslant 1$, thus $|e_n|=n-1$ and $|o_n|=n$. Then the \textit{coaugmented homotopy cooperad structure on the graded collection} $\mathscr{S}(\mathfrak{RjU}^{\text{!`}})=\bk \widetilde{m}_n+\bk \widetilde{P}_n$ is defined as \cite{Song}.

we denote $\mathfrak{RjU}^{\mathbb{A}\rightarrow \mathbb{B}}$ by $\mathfrak{RjU}_{\bullet\to\bullet}$ and $\mathscr{S}(\mathfrak{RjU}^{\text{!`}})\triangleq \mathfrak{RjU}^{\text{!`}}$. Let us consider $\{A,B\}$-\textit{colored $\mathbb{S}$-module} 
$$\mathcal{M}_{\bullet\to \bullet}=\mathfrak{\overline{RjU}}^{\text{!`}}_{A\to A}\oplus \mathfrak{\overline{RjU}}^{\text{!`}}_{B\to B}\oplus s \mathfrak{RjU}^{\text{!`}}_{A\to B}.$$

We define the \textit{cobar complex} $\Omega(\mathcal{M}_{\bullet\to \bullet})$ by $\mathfrak{RjU}_{\bullet\to \bullet,\infty}.$ By \cite{Do}, it follows that 

\begin{prop}\label{min-model} 
$\mathfrak{RjU}_{\bullet\to \bullet,\infty}$ is the \textit{minimal model} of $\mathfrak{RjU}_{\bullet\to\bullet}$.
\end{prop}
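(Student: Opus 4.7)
The plan is to reduce the statement to a direct application of the general theorem of Dotsenko and Poncin \cite{Do}, which manufactures a minimal model of the $2$-colored morphism operad of any Koszul operad from its Koszul dual homotopy cooperad.

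First I would record the input hypothesis, namely that $\mathfrak{RjU}$ is Koszul, which is recalled from \cite{Song} and is explicitly invoked in the preamble to the statement. This gives access to the Koszul dual homotopy cooperad $\mathfrak{RjU}^{\text{!`}}$ recalled above, whose cobar construction $\Omega(\mathfrak{RjU}^{\text{!`}})$ is quasi-isomorphic to $\mathfrak{RjU}$ and realises the monochromatic minimal model $\mathfrak{RjU}_{\infty}$ of Definition \ref{def-minimodel1}.

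Next I would identify the colored $\mathbb{S}$-module $\mathcal{M}_{\bullet\to\bullet}$ appearing in the statement as precisely the Dotsenko-Poncin input: the two monochromatic summands $\mathfrak{\overline{RjU}}^{\text{!`}}_{A\to A}$ and $\mathfrak{\overline{RjU}}^{\text{!`}}_{B\to B}$ provide homotopy generators for each Nijenhuis algebra structure, while the shifted bicolored summand $s\,\mathfrak{RjU}^{\text{!`}}_{A\to B}$ carries the homotopy coherences for the morphism $f$. The cobar differential on $\Omega(\mathcal{M}_{\bullet\to\bullet})$ then splits into monochromatic parts, reproducing the formulas of Definition \ref{def-minimodel1} in each color, together with a bicolored part that enforces the morphism relations $f\circ_1\mu_A=(\mu_B\circ_1 f)\circ_2 f$ and $f\circ_1 P_A=P_B\circ_1 f$ up to higher homotopies.

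Third, I would check the two defining properties of a minimal model. Minimality is essentially formal: the cobar differential lands in the ideal of decomposable elements of the underlying free $2$-colored operad by construction. For the quasi-isomorphism $\Omega(\mathcal{M}_{\bullet\to\bullet})\to \mathfrak{RjU}_{\bullet\to\bullet}$, I would filter $\Omega(\mathcal{M}_{\bullet\to\bullet})$ by the number of bicolored generators and exploit the associated spectral sequence. On monochromatic strata the complex reduces to $\Omega(\mathfrak{RjU}^{\text{!`}})$ in each color, which is acyclic onto $\mathfrak{RjU}$ by Koszulness; the strata involving bicolored generators then contribute a free resolution of the single morphism generator $f$, as in the prototypical Dotsenko-Poncin construction.

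The hard part will be handling the bicolored strata cleanly, since the mixed differential couples the homotopy Nijenhuis structure on the source with that on the target through iterated applications of $f$, and the combinatorics governing this coupling are what Dotsenko and Poncin package abstractly. In practice, the entire difficulty is therefore concentrated in verifying that the hypotheses of their theorem (unary plus binary generators, inhomogeneous quadratic-type relations, Koszulness of the ambient operad) are met by $\mathfrak{RjU}$, after which Proposition \ref{min-model} follows by direct citation of \cite{Do}.
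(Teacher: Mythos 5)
Your overall route is the one the paper takes: its entire proof of Proposition \ref{min-model} consists of feeding the colored $\mathbb{S}$-module $\mathcal{M}_{\bullet\to \bullet}=\mathfrak{\overline{RjU}}^{\text{!`}}_{A\to A}\oplus \mathfrak{\overline{RjU}}^{\text{!`}}_{B\to B}\oplus s \mathfrak{RjU}^{\text{!`}}_{A\to B}$ into the machinery of \cite{Do} and citing their theorem, and your identification of the two monochromatic summands and the shifted bicolored summand, together with the filtration by the number of bicolored generators, is exactly the intended mechanism.

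There is, however, a genuine gap in how you propose to verify the hypotheses. You make classical Koszulness of $\mathfrak{RjU}$ the linchpin twice: once as the ``input hypothesis'' and once in the spectral-sequence step, where the monochromatic strata are claimed to be acyclic onto $\mathfrak{RjU}$ ``by Koszulness.'' But $\mathfrak{RjU}$ is not a quadratic operad: the defining relation $(\mu \circ_1 P)\circ_2 P -(P \circ_1 \mu)\circ_1 P-(P \circ_1 \mu)\circ_2 P + (P \circ_{1} P) \circ_{1} \mu$ is cubic in the generators $\mu$ and $P$, so classical Koszul duality is unavailable and the checklist you propose (``inhomogeneous quadratic-type relations, Koszulness of the ambient operad'') would fail. (The sentence opening Section \ref{sec:6}, ``Since operad of Nijenhuis algebras is Koszul,'' is at odds with the paper's own introduction, which presents $\mathfrak{RjU}$ as an instance of the non-Koszul situation, and with \cite{Song}.) The correct input at that step is not Koszulness but the result of \cite{Song} that $\Omega(\mathfrak{RjU}^{\text{!`}})=\mathfrak{RjU}_{\infty}$ of Definition \ref{def-minimodel1} is the minimal model of $\mathfrak{RjU}$, where $\mathfrak{RjU}^{\text{!`}}$ is the Koszul dual \emph{homotopy} cooperad. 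With that substitution --- the quasi-isomorphism on the monochromatic strata supplied by \cite{Song} rather than by Koszul duality --- your filtration argument goes through and the remainder of your outline coincides with the paper's (implicit) proof.
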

The general construction  produces an \textit{$L_\infty$-algebra} structure on the \textit{space of $\mathbb{S}$-module morphisms}
$$L_{A,B}=\Hom (\mathcal{M}_{\bullet\to \bullet}, \mathrm{End}_{A\oplus B}),$$
where $\mathrm{End}_{A\oplus B}$ is $\{A,B\}$-\textit{colored operad}. This space of morphisms can be naturally identified with the space
$$(f_a,f_b, f_{ab})\in \Hom(\overline{\mathfrak{RjU}}^{\text{!`}}(A), A)\oplus\Hom (\overline{\mathfrak{RjU}}^{\text{!`}}(B), B)\oplus\Hom (s\mathfrak{RjU}^{\text{!`}}(A), B).$$  By means of \cite{Me} for \textit{properads}, $(f_a,f_b, f_{ab})$ is a solution to the \textit{Maurer–Cartan equation} of the $L_\infty$-algebra $L_{A,B}$ if and only if $f_a$ is a structure of a \textit{homotopy $\mathfrak{RjU}$-algebra} on $A$, $f_b$ is a structure of a \textit{homotopy $\mathfrak{RjU}$-algebra} on $B$, and $f_{ab}$ is a \textit{homotopy morphism} between these algebras.

\end{document}